\theoremstyle{plain}
\newtheorem{theorem}{Theorem}[section]
\newtheorem{lemma}[theorem]{Lemma}
\newtheorem{proposition}[theorem]{Proposition}
\theoremstyle{definition}
\numberwithin{equation}{section}
\newcommand{\R}{\mathbb{R}}
\newcommand{\barU}{\bar{U}}
\newcommand{\pitwo}{\tfrac{\pi}{2}}
\newcommand{\twth}{\tfrac{2}{3}}
\newcommand{\thtw}{\tfrac{3}{2}}
\newcommand{\abs}[2][]{#1\lvert #2 #1\rvert}
\title{An asymptotic behaviour near the crest of waves of extreme form on water of finite depth}
\author{Vladimir Kozlov$^1$, Evgeniy Lokharu$^1$}
\address{$^1$Department of Mathematics, Link\"oping University, SE-581 83 Link\"oping, Sweden}
\begin{document}
	
\begin{abstract}

We prove local higher-order asymptotics for extreme water waves with vorticity near stagnation points. We obtain that the behaviour of solutions and their regularity depend substantially on the vorticity. In particular, we show that extreme waves with a negative vorticity distribution have concave profiles near the crest. Our approach is based on new regularity results and asymptotic analysis of the corresponding nonlinear problem in a half-strip. Our main result is local and therefore is valid for a broad range of problems, such as for waves with a piecewise constant vorticity, stratified waves, flows with counter-currents or waves on infinite depth. 	
\end{abstract}

\maketitle

\section{Introduction}

Extreme waves or also known as waves of greatest height is an important phenomena in the mathematical theory of water waves. The story goes back to Sir George Stokes \cite{Stokes49} who in 1880s studied periodic solutions of the water wave problem when the wavelength is fixed. He assumed that such waves can be parametrized by the wave height $\sup_{x \in \R} \eta(x) - \inf_{x \in \R} \eta(x)$, where $y = \eta(x)$ is the surface profile. In \cite{Stokes80} Stokes conjectured that the family of periodic waves contains ``\textit{the wave of greatest height}'' distinguished by sharp crests of included angle $120^\circ$, see Figure 1. That was a remarkable hypothesis for the time and it took about two centuries before it was rigorously justified.

The Stokes conjecture might be divided into two independent parts:
\begin{itemize}
	\item[(i)] there exists a sufficiently regular travelling wave solution of the water wave problem that enjoys stagnation at every crest (where the horizontal and vertical components of the relative velocity fields vanish);
	\item[(ii)] every solution from (i) with surface profile $\eta$ must satisfy
	\[
	\lim_{x \to x_0\pm} \eta_x(x) = \mp \frac{1}{\sqrt{3}}
	\]
	at every stagnation point $(x_0,\eta(x_0))$; this corresponds to the included angle $120^\circ$.
\end{itemize}
The main difficulty about the Stokes conjecture is that waves with surface stagnations in (i) are normally large-amplitude solutions with low regularity. That makes their analysis complicated and requires different tools compared to perturbation methods for small-amplitude nonlinear water waves. The first construction of large-amplitude waves is due to Krasovskii in \cite{Krasovskii60}, who proved the following statement about irrotational water waves in deep water. It was shown that for any given flux $Q$, wavelength $L$ and $\beta \in (0,\tfrac{\pi}{6})$ there exists a Stokes wave with $\max \theta = \beta$, where $\theta$ is the inclination angle to the horizontal. Even so this is a beautiful statement with a clear geometrical interpretation it does not explain if such waves are close to stagnation or not. About two decades later Keady and Norbury \cite{KeadyNorbury78} used a different approach based on the global bifurcation theory for the Nekrasov equation. They proved that there exist Stokes waves (symmetric periodic profiles having exactly one crest and trough in every minimal period) that are arbitrary close to the stagnation. The limiting wave with surface stagnation was obtained by Toland \cite{Toland1978a} in the case of infinite depth. For the corresponding results about extreme (by extreme waves we mean solutions with surface stagnation points as in (i)) Stokes and solitary waves on finite depth we refer to Amick and Toland \cite{AmickToland81a,AmickToland81b}.

The second part (ii) of the Stokes conjecture was verified independently by Amick, Fraenkel, and Toland in \cite{AmickFraenkelToland82} and by Plotnikov \cite{Plotnikov82}. Much later, Plotnikov and Toland \cite{Plotnikov2004} proved the existence of extreme periodic waves that are convex everywhere outside crests. The Stokes conjecture in the irrotational case was refined by Varvaruca and Weiss in \cite{Varvaruca2011}, who proved (ii) for solutions under weak regularity assumptions and without any symmetry or monotonicity constraints. In particular, (ii) turned out to be a local property and is valid for the extreme solitary wave found in \cite{AmickToland81b}. 

So far all mentioned results concerned with water waves on the surface of irrotational flows, while the rotational theory is much less developed. The subject has attracted a significant interest with the pioneering study by Constantin and Strauss \cite{ConstantinStrauss04}. The authors used bifurcation and degree theories to construct global connected sets of large-amplitude periodic waves with vorticity that can be arbitrary close to the stagnation. Thus, one could think of constructing an extreme wave by passing to the limit along a sequence of waves approaching the stagnation. This was formally done in \cite{Varvaruca09} under certain assumptions on the vorticity. We say formally because it is not known if the limiting wave is trivial or not. By a trivial extreme wave we mean a laminar flow whose surface or bottom consists of stagnation points. To overcome this difficulty a different approach was proposed in \cite{LokharuKoz2020} and extreme waves subject to (i) were constructed.

The second part (ii) of the Stokes conjecture is more complicated for waves with vorticity. In their study \cite{Varvaruca2012} Varvaruca and Weiss found (without proving the existence) that surface profiles near stagnation points are either (a1) Stokes corners ($120^\circ$), (a2) horizontally flat with $\eta_x(0)=0$, or (a3) overhanging horizontal cusps. So far it is not known if options (a2) and (a3) are possible, though (a2) is always true for extreme laminar flows. 

Beside the questions (i) and (ii) one can also ask about the regularity of extreme waves near stagnation points. The only results of this type are \cite{Amick1987a} and \cite{McLeod1987} for irrotational waves on  infinite depth. An asymptotic expansion for the inclination angle in the conformal variables was obtained in \cite{Amick1987a}, while \cite{McLeod1987} contains an analysis of the leading order coefficient. In the present paper we investigate these questions about extreme water waves with vorticity. Assuming (ii) we obtain local asymptotics for the surface profile in the right neighbourhood of the stagnation point. The latter asymptotics essentially depend on values of the vorticity function near the surface. As a consequence we obtain a surprising result: the surface profile can be concave near the stagnation point when the vorticity has the right sign. This observation is confirmed by several numerical studies, such as \cite{JOY2008} and \cite{Dyachenko2019a}. In some sense our results improve the statement of \cite{Amick1987a} for waves on infinite depth, which is obtained only in conformal variables, while we provide expansions in physical variables. 

\begin{figure}[t!]
	\centering
	\begin{subfigure}[t]{0.5\textwidth}
		\centering
		\includegraphics[scale=0.8]{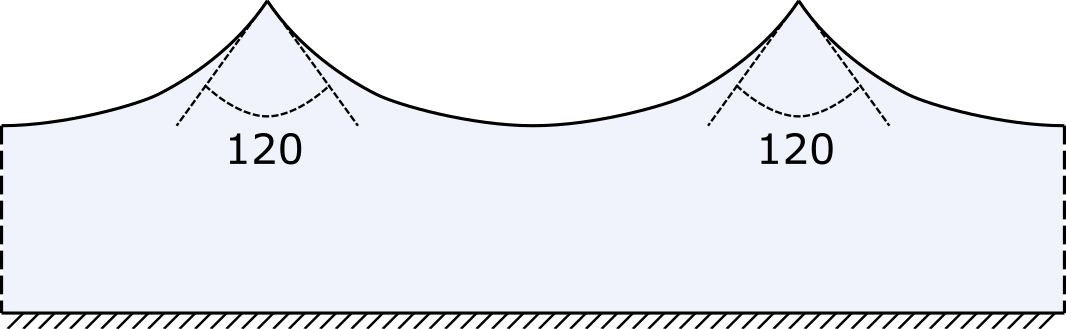}
		\caption{an extreme Stokes wave}
	\end{subfigure}%
	~ 
	\begin{subfigure}[t]{0.5\textwidth}
		\centering
		\includegraphics[scale=0.8]{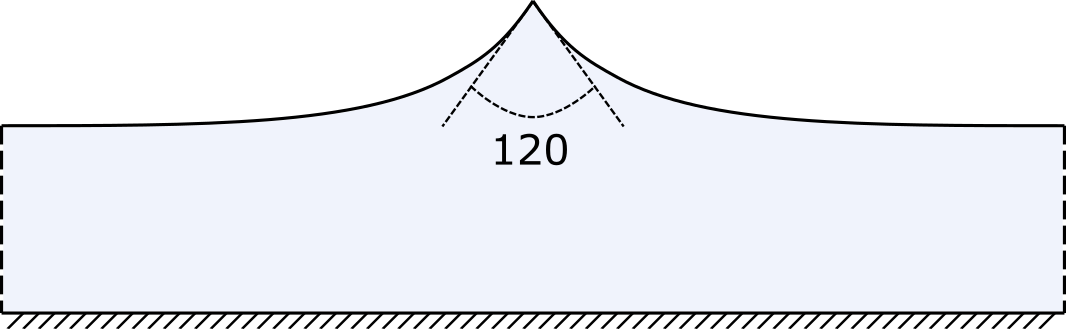} \caption{an extreme solitary wave}
	\end{subfigure}
	\caption{}
\end{figure}

\section{Statement of the problem}

We consider the classical water wave problem for two-dimensional steady waves with vorticity on water of finite depth. An infinite fluid region is occupied with an ideal fluid of constant unit density, separated from the air by an unknown free surface, where the effects of surface tension and air motion are neglected. Assuming the motion of the fluid is steady we can find an appropriate coordinate system moving with a constant speed $c > 0$ in which the flow is stationary and governed by Euler equations:
\begin{subequations}\label{eqn:trav}
	\begin{align}
		\label{eqn:u}
		(u-c)u_x + vu_y & = -P_x,   \\
		\label{eqn:v}
		(u-c)v_x + vv_y & = -P_y-g, \\
		\label{eqn:incomp}
		u_x + v_y &= 0, 
	\end{align}
	which hold true in a two-dimensional fluid domain $D_\eta = \{ (x,y) \in \R^2: 	0 < y < \eta(x), \ x \in \R \}$. Here $(u,v)$ are components of the relative velocity field, $y = \eta(x)$ is the surface profile, $c$ is the wave speed, $P$ is the pressure and $g$ is the gravitational constant. The corresponding boundary conditions are
	\begin{alignat}{2}
		\label{eqn:kinbot}
		v &= 0&\qquad& \text{on } y=0,\\
		\label{eqn:kintop}
		v &= (u-c)\eta_x && \text{on } y=\eta,\\
		\label{eqn:dyn}
		P &= P_{\mathrm{atm}} && \text{on } y=\eta.
	\end{alignat}
\end{subequations}
It is often assumed in the literature that the flow is irrotational, that is $v_x - u_y$ is zero everywhere in the fluid domain. Under this assumption components of the velocity field are harmonic functions, which allows to apply methods of complex analysis. Being a convenient simplification it forbids modeling of non-uniform currents, commonly occurring in nature. In the present paper we will consider rotational flows, where the vorticity function is defined by
\begin{equation} \label{vort}
	\omega = v_x - u_y.
\end{equation}
Throughout the paper we assume that the flow is free from stagnation points and the horizontal component of the relative velocity field does not change sign, that is
\begin{equation} \label{uni}
	u-c < 0 
\end{equation}
everywhere in the fluid. We call such flows unidirectional.

In the two-dimensional setup relation \eqref{eqn:incomp} allows to reformulate the problem in terms of a stream function $\psi$, defined implicitly by relations
\[
\psi_y = c-u, \ \ \psi_x =  v.
\]
This determines $\psi$ up to an additive constant, while relations \eqref{eqn:kinbot},\eqref{eqn:kinbot} force $\psi$ to be constant along the boundaries. Thus, by subtracting a suitable constant, we can always assume that
\[
\psi = m, \ \ y = \eta; \ \ \psi = 0, \ \ y = 0.
\]
Here $m$ is the mass flux, defined by
\[
m = \int_0^\eta (u-c) dy.
\]
In what follows we will use non-dimensional variables proposed by Keady \& Norbury \cite{KeadyNorbury78}, where lengths and velocities are scaled by $(m^2/g)^{1/3}$ and $(mg)^{1/3}$ respectively; in new units $m=1$ and $g=1$. For simplicity we keep the same notations for $\eta$ and $\psi$.

Taking the curl of Euler equations \eqref{eqn:u}-\eqref{eqn:incomp} one checks that the vorticity function $\omega$ defined by \eqref{vort} is constant along paths tangent everywhere to the relative velocity field $(u-c,v)$; see \cite{Constantin11b} for more details. Having the same property by the definition, stream function $\psi$ is strictly monotone by \eqref{uni} on every vertical interval inside the fluid region. These observations together show that $\omega$ depends only on values of the stream function, that is
\[
\omega = \omega(\psi).
\]
This property and Bernoulli's law allow to express the pressure $P$ as
\begin{align}
	\label{eqn:bernoulli}
	P-P_\mathrm{atm} + \frac 12\abs{\nabla\psi}^2 + y  + \Omega(\psi) - \Omega(1) = const,
\end{align}
where 
\begin{align*}
	\Omega(\psi) = \int_0^\psi \omega(p)\,dp
\end{align*}
is a primitive of the vorticity function $\omega(\psi)$. Thus, we can eliminate the pressure from equations and obtain the following problem:

	\begin{subequations}\label{sysstream}
	\begin{alignat}{2}
	\label{sys:stream:lap}
	\Delta\psi + \omega(\psi)&=0 &\qquad& \text{for } 0 < y < \eta,\\
	\label{sys:stream:bern}
	\tfrac 12\abs{\nabla\psi}^2 +  y  &= r &\quad& \text{on }y=\eta,\\
	\label{sys:stream:kintop} 
	\psi  &= 1 &\quad& \text{on }y=\eta,\\
	\label{sys:stream:kinbot} 
	\psi  &= 0 &\quad& \text{on }y=0.
	\end{alignat}
At the crest located at $(0,r)$ the flow has a stagnation point, that is
	\begin{equation}\label{stag}
		\psi_y(0,r)=\psi_x(0,r) = 0, \ \ \eta(0) = r.
	\end{equation}

In what follows we will consider a local solution of the problem, which unidirectional in a neighbourhood of the stagnation point. Thus, the boundary condition \eqref{sys:stream:kinbot} might be omitted. Therefore, our results will be valid for waves on infinite depth and for flows with counter-currents.

We assume that the surface profile $\eta$ is defined on an open interval $I=(-\delta,\delta)$ and is an even function there. Furthermore, we assume that $\eta_x \in C^{1}([0,\delta])$, while
\begin{equation} \label{eta120}
\lim_{x \to 0+} \eta_x(x) = - \tfrac{1}{\sqrt{3}}.
\end{equation}
\end{subequations}
The stream function $\psi$ is defined over the set
\[
	D_\eta^\delta = \{ (x,y) \in \R^2: \ x^2 + (y-r)^2 \leq \delta^2, \ \ -\delta < x < +\delta, \ \ y \leq \eta(x) \}.
\]
As for regularity of $\psi$ we assume that $\psi \in C^{1}(\overline{D_\eta^\delta}) \cap C^2(D_\eta^\delta)$. We additionally require that $\psi$ is even in the $x$-variable.

\begin{theorem} \label{thm1}
	Let $(\psi,\eta)$ be as above and solve \eqref{sysstream} in $D_\eta^\delta$ for some $\delta>0$. Assume that $\omega \in C^1([1-\delta,1])$ and $\psi_y > 0$ for $(x,y) \in D_\eta^\delta, y \neq r$. Then
	\begin{equation}\label{eta1}
		\eta_x(x) = - \tfrac{1}{\sqrt{3}} + \frac{2^{\tfrac32}}{3^{\tfrac54}} \omega(1) \sqrt{x} + a_1 \omega^2(1) x+f(x)
	\end{equation}
	for some explicit $a_1 > 0$, 	where $f = O(x^{\tfrac32(\tau_1-1)}), \ f_x = O(x^{\tfrac32(\tau_1-1)-1})$ as $x \to 0+$ and $\tau_1 \approx 1.8$ is the smallest root of $\tau_1 = - \tfrac{1}{\sqrt{3}} \cot(\tfrac{\pi}{2} \tau_1)$. 
\end{theorem}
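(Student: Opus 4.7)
The strategy is to reduce the problem to a quasilinear boundary value problem on a half-strip in which the Stokes corner singularity has been resolved, then to extract the first few terms of the asymptotic expansion by solving explicit forced linear sub-problems, and finally to bound the remainder using a spectral analysis that produces the transcendental equation defining $\tau_1$.

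\emph{Step 1 (half-strip reformulation).} Since $\psi_y>0$ off $y=r$ and $\psi$ is even, the partial hodograph map $(x,y)\mapsto(q,p):=(x,1-\psi)$ is a diffeomorphism from a right half-neighbourhood of the stagnation point onto a rectangle $(0,\delta)\times(0,1)$, sending the free surface to $\{p=0\}$ and the stagnation to the corner $(0,0)$. The height function $Y(q,p):=y$ satisfies a quasilinear elliptic equation in the interior and the Bernoulli condition $\tfrac12(1+Y_q^2)/Y_p^2+Y=r$ on $p=0$; the latter degenerates at $(0,0)$ because of the stagnation. A further Stokes-type substitution that resolves the $120^\circ$ corner turns the problem into a regular quasilinear elliptic problem on a half-strip, with $\eta_x$ recoverable from the boundary trace of a first derivative of the new unknown.

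\emph{Step 2 (explicit low-order terms).} The self-similar Stokes corner flow is the trivial solution of the linearised resolved problem and accounts for the leading $-\tfrac{1}{\sqrt{3}}$. Writing the Taylor expansion $\omega(\psi)=\omega(1)+\omega'(1)(\psi-1)+\ldots$ in $\Delta\psi+\omega(\psi)=0$, the constant term $\omega(1)$ acts as a non-resonant forcing in the half-strip; separation of variables reduces it to a linear ODE in $p$ that can be solved in closed form, and its trace at $p=0$ yields the explicit term $\tfrac{2^{3/2}}{3^{5/4}}\omega(1)\sqrt{x}$ in $\eta_x$. A second iteration of the ansatz, combining the quadratic self-interaction of this correction via the Bernoulli condition with the $\omega'(1)$ forcing from the next order in the vorticity expansion, produces the $\omega(1)^2 x$ term with the stated constant $a_1$; positivity of $a_1$ is verified by direct computation of the inhomogeneous problem arising at this order.

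\emph{Step 3 (remainder via a spectral gap).} The linearisation of the resolved problem about the Stokes corner flow admits separated solutions of the form $q^\tau\Phi_\tau(p)$, where $\Phi_\tau$ satisfies a boundary value problem in $p$ whose solvability condition reduces to $\tau=-\tfrac{1}{\sqrt{3}}\cot(\tfrac{\pi}{2}\tau)$. One verifies that this equation has no positive root in $(0,1)$ and that $\tau_1\approx 1.8$ is its smallest positive root; the explicit forced scales from Step 2 all correspond to exponents strictly below $\tau_1$. A weighted Schauder (or $L^p$) estimate on the half-strip, adapted to this spectral gap, bounds the residual of the ansatz by the first unaccounted eigenmode, and translating the result back to the physical variable via the Stokes conformal scaling gives $f=O(x^{3(\tau_1-1)/2})$. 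The derivative bound $f_x=O(x^{3(\tau_1-1)/2-1})$ follows by differentiating the equation for the remainder and applying the same weighted estimate one derivative higher.

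\emph{Main obstacle.} The decisive step is the sharp weighted regularity up to the corner $(0,0)$: the linearised operator is of Fuchsian type with countable spectrum given by the transcendental equation, and the estimate must produce the exact exponent $\tau_1$ rather than $\tau_1-\epsilon$. This requires careful treatment of the coupling between the interior quasilinear equation and the nonlinear Bernoulli condition, exclusion of resonances between the forcing scales $1,\sqrt{x},x$ and the spectrum, and control of potential logarithmic corrections near integer eigenvalues. A subsidiary difficulty is exhibiting the correct sign of the integral defining $a_1$.
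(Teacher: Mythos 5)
Your proposal takes a genuinely different coordinate route from the paper. The paper works in log-polar coordinates $x = e^{-t}\cos\theta$, $y = r + e^{-t}\sin\theta$ centered at the stagnation point, flattens the free boundary, linearizes about the Stokes corner flow, and then runs a Kozlov--Maz'ya type analysis in exponentially weighted $W^{1,2}_\beta$ spaces on a half-strip; the spectral problem that produces $\tau_1 = -\tfrac{1}{\sqrt{3}}\cot(\tfrac{\pi}{2}\tau_1)$ is a Sturm--Liouville problem in the angular variable $z\in(0,\pitwo)$ with mixed Neumann/Robin data. You instead propose working in partial-hodograph variables $(q,p)=(x,1-\psi)$ with a Mellin-type ansatz $q^\tau\Phi_\tau(p)$ in the stream variable. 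In principle the two routes must produce the same indicial equation, and your Step~2 and Step~3 match the paper's ``explicit forced corrections + spectral gap + weighted estimate'' skeleton.

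That said, two critical steps are missing and would need real work to fill.

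First, the ``Stokes-type substitution that resolves the $120^\circ$ corner'' is asserted, not defined, and this is exactly where the hodograph approach is delicate. The hodograph Jacobian is $\psi_y$, which vanishes at $(0,r)$, so the height function $h=r-y$ satisfies $h_p = 1/\psi_y \sim \rho^{-1/2}$, blowing up at the corner; the hodograph image is not a neighbourhood of a smooth corner on which standard elliptic theory applies. The paper does use the hodograph map, but only locally and together with the anisotropic scaling $q = x\hat q$, $p = x^{3/2}\hat p$, $h = x\hat h$ (cf.\ Lemma~\ref{lemmaboundary}) to render the problem uniformly elliptic on unit-scale boxes; it then switches to log-polar coordinates for the global asymptotics precisely because there the corner becomes a strip end. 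Without specifying the substitution and checking it actually renders the problem ``regular'' with uniform constants, the claim in your Step~1 is not justified.

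Second, your argument has nothing playing the role of the paper's regularity theorem (Theorem~\ref{thmreg}) and the two-sided bound $C_1\rho^{1/2}<\psi_y<C_2\rho^{1/2}$ of Lemma~\ref{l:psiybot}. The hypotheses only give $\psi\in C^1(\overline{D_\eta^\delta})\cap C^2(D_\eta^\delta)$, which says nothing quantitative about the rate at which $\nabla\psi$ vanishes at the stagnation point. The weighted Schauder (or $L^p$) estimate you invoke in Step~3 needs an a~priori decay rate to get started: the paper deduces from the $C^{1,1/2}$ regularity that $\hat\psi$ decays like $e^{-(3/2)t}$ (Lemma~\ref{lemmahatpsi}), that $\xi_q$ is uniformly $C^{1/2}$ (Lemma~\ref{lemmaxi}), and that $\Phi$, $\Psi$ are $o(e^{-(3/2)q})$ (Proposition~\ref{holderdecay}), and only then does it feed this into the invertibility of $\mathcal L$ to bootstrap up to the $\beta<2$ and finally $\beta<(3/2)\tau_1$ weights. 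Absent these preparatory estimates, the nonlinear remainder terms in the resolved equation are not small relative to the model operator, and the fixed-point/spectral-gap argument does not close. This is not a cosmetic omission: roughly a third of the paper is devoted to precisely this regularity/bootstrap issue, and your plan leaves it untouched.
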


It follows from the theorem that $\eta \in C^{2,\gamma}([0,\delta])$ for some small $\gamma > 0$, provided $\omega(1) = 0$. Beside \eqref{eta1} one can also obtain the corresponding asymptotics for the stream function, where the leading order term is determined by the Stokes corner flow. For more details see Section \ref{secasymppsi}. \\

Our proof of Theorem \ref{thm1} is given in the next sections. 

\section{H\"older regularity}

Our aim in this section is to obtain the optimal global regularity of an extreme wave. 

\begin{theorem} \label{thmreg}
	Suppose that $(\psi,\eta)$ satisfies assumptions of Theorem \ref{thm1}. Then there exists $0 < \delta_1 < \delta$ such that 
	\[
	\|\nabla \psi\|_{C^{\tfrac12}(\overline{\Omega}_\eta^{\delta_1})} \leq C,
	\]
	where the constant $C$ depends only on $\delta, \delta_1, \omega$ and $r$.
\end{theorem}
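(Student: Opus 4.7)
The approach combines classical elliptic/free-boundary theory on the complement of a neighborhood of the crest with a dyadic rescaling argument near $(0,r)$. On the set $\{(x,y)\in D_\eta^\delta : \rho \geq \rho_0\}$ with $\rho := \sqrt{x^2+(y-r)^2}$ and any $\rho_0 > 0$, the hypothesis $\psi_y > 0$ and the Bernoulli relation $|\nabla\psi|^2 = 2(r-y)$ on $y = \eta$ together yield $|\nabla\psi| \neq 0$, while $\eta_x \in C^1([0,\delta])$ provides the needed smoothness of the free boundary. Applying standard Schauder estimates for the semilinear equation $\Delta\psi + \omega(\psi) = 0$ with the oblique (nonlinear) Bernoulli condition then gives $\psi \in C^{2,\alpha}$ away from the crest, and in particular a uniform $C^{1/2}$-bound for $\nabla\psi$ on $\{\rho \geq \rho_0\}$. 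The task is therefore to extend the bound up to the corner.

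To that end, set $\lambda_k := 2^{-k}\rho_0$ and rescale by
\[
\psi_k(X,Y) \,:=\, \lambda_k^{-3/2}\bigl[1 - \psi(\lambda_k X,\, r + \lambda_k Y)\bigr]
\]
on the rescaled domains $D_k$. Because $\eta_x(0+) = -1/\sqrt{3}$ with $\eta_x \in C^1$, the $D_k$ converge as $k\to\infty$ to the $120^\circ$ Stokes wedge $W = \{(X,Y):Y<-|X|/\sqrt{3}\}$, with $C^1$-convergence of boundaries up to the corner. The rescaled equation is $\Delta\psi_k = -\lambda_k^{1/2}\omega(1 - \lambda_k^{3/2}\psi_k)$, an $O(\lambda_k^{1/2})$ perturbation of Laplace's equation on bounded sets, and the rescaled boundary conditions take the scale-invariant form $\psi_k = 0$ and $|\nabla\psi_k|^2 = -2Y$ on the free surface. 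On a fixed reference annulus $A := \{1/2 \leq |(X,Y)| \leq 2\} \cap D_k$ the boundary is uniformly smooth and $-2Y$ is bounded below, so the oblique condition is uniformly elliptic; standard Schauder estimates then give $\|\psi_k\|_{C^{2,\alpha}(A)} \leq C$ \emph{provided} $\|\psi_k\|_{L^\infty(A)}$ is controlled uniformly in $k$. Scaling back and summing the resulting dyadic estimates yields the pointwise bound $|\nabla\psi(p)| \leq C\rho(p)^{1/2}$ and, by comparing nearby points inside the same dyadic annulus via the same rescaling, the required $C^{1/2}$ modulus of continuity of $\nabla\psi$.

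\textbf{Main obstacle.} The delicate step is the uniform bound $\|\psi_k\|_{L^\infty(A)} \leq C$, i.e.\ the a priori growth estimate $|1 - \psi(x,y)| \leq C\rho^{3/2}$ near the crest. Without it, the Schauder constants degrade with $k$ and the rescaling collapses. I would establish it by constructing a barrier built from the explicit Stokes corner harmonic $\rho^{3/2}\cos(3\theta/2)$: comparing $1-\psi$ against a suitably scaled version of this harmonic, perturbed by a lower-order term (e.g.\ a multiple of $\rho^2$) that absorbs the $\omega$-contribution, and applying the maximum principle on $D_\eta^\delta$. The subtlety is that on the free surface one has only the nonlinear Bernoulli condition, not a Dirichlet bound in the gradient, so the comparison must be set up with care, possibly by differentiating the boundary relation or by feeding the bound first into a weaker Hopf-type argument and then bootstrapping. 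Once the barrier is in place, subsequential blow-up limits of $\psi_k$ solve the limiting Stokes corner problem in $W$ and are classified as $\tfrac{2}{3}\rho^{3/2}\cos(\tfrac{3}{2}\theta)$, which pins down the constants and yields the uniform $C^{1/2}$-estimate.
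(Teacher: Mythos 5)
Your route is genuinely different from the paper's. Where you rescale $1-\psi$ dyadically and compare blow-up domains to the $120^\circ$ Stokes wedge, the paper first proves a sharp two-sided pointwise bound $C_1\rho^{1/2} < \psi_y < C_2\rho^{1/2}$ (Lemma~\ref{l:psiybot}) and then performs a partial hodograph transform $q=x$, $p=1-\psi$, $h(q,p)=r-y$ in slabs $\Omega_x$ adjacent to the free surface; the two-sided bound on $\psi_y$ is precisely what makes $h_p$ bounded away from $0$ and $\infty$, so after rescaling $(q,p)\mapsto(x\hat q,x^{3/2}\hat p)$ the hodograph equation becomes uniformly elliptic on a fixed reference rectangle with a nonlinear scale-invariant boundary condition, and Lieberman's estimates give $\|\psi\|_{C^{2,\gamma}(\overline{\Omega_x})}\leq Cx^{-(1/2+\gamma)}$. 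Combined with analogous interior ball estimates this yields the $C^{1/2}$ bound by a casewise comparison of point pairs. Your method keeps the boundary free and exploits scale invariance of the Bernoulli relation directly; it avoids the hodograph transform altogether, but it hinges entirely on the a priori growth bound $|1-\psi|\lesssim\rho^{3/2}$, as you yourself flag.

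That flagged step is where the proposal remains a plan rather than a proof. The barrier $\rho^{3/2}\cos(3\theta/2)$ vanishes on the rays of the ideal wedge, not on the actual free surface $y=\eta(x)$; since $\eta$ approaches the wedge only to first order, the barrier can take the wrong sign on the real boundary, so a naive maximum-principle comparison with $1-\psi=0$ there does not close, and the correction terms and the absorption of the $\omega$-source need to be exhibited. (Your worry about the Bernoulli condition versus a Dirichlet condition is somewhat misplaced, since $1-\psi=0$ on the free surface is Dirichlet; the real issue is the mismatch between the barrier's zero set and the actual boundary.) The paper sidesteps all of this by bounding $\psi_y$ rather than $1-\psi$: the lower bound follows from the Bernoulli relation on the surface together with the boundary and interior Harnack inequalities applied to $\psi_y$, which satisfies $\Delta\psi_y+\omega'(\psi)\psi_y=0$; the upper bound comes from the maximum principle applied to $f=\psi_y^2-A(r-y)+B(r-y)^2$, which is subharmonic for $B=\sup|\psi_y|^2|\omega'(\psi)|$ and nonpositive on the boundary for $A$ large enough. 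If you import that lemma into your framework (one integration then gives the $\rho^{3/2}$ bound you need), the remainder of your dyadic-annulus scheme does close and would be a legitimate alternative proof of the theorem; as written, it has a gap at exactly the point where the difficulty concentrates.
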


Note that the H\"older exponent $1/2$ is optimal and can not be improved. Thus, the H\"older space $C^{1,1/2}$ is the natural choice for the stream function $\psi$ of an extreme wave. Our proof consists of several lemmas. The key property of the stream function is stated in the next lemma. 

\begin{lemma} \label{l:psiybot}
	There exist $\delta_1 > 0$ and constants $C_1,C_2>0$ such that 
	\begin{equation} \label{psiybot}
	C_1 \rho^{\tfrac12} < \psi_y(x,y) < C_2 \rho^{\tfrac12}
	\end{equation}
	for all $(x,y) \in \Omega_\eta^{\delta_1}$, where $\rho = (x^2 + (r-y)^2)^{\tfrac12}$ and $C_1,C_2$ are independent of $\rho$.
\end{lemma}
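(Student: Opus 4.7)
The plan is to obtain the upper and lower halves of \eqref{psiybot} by complementary arguments: the upper bound from the Bernoulli condition together with interior elliptic estimates, and the lower bound from a blow-up/compactness argument that identifies the rescaled limit with the (unique) Stokes corner flow.

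For the upper bound, on the free surface \eqref{sys:stream:bern} gives $\psi_y\le|\nabla\psi|=\sqrt{2(r-\eta(x))}$, and \eqref{eta120} implies $r-\eta(x)\le C|x|$, so $\psi_y(x,\eta(x))\le C\rho^{1/2}$ on the boundary. Differentiating \eqref{sys:stream:lap} in $y$ yields the linear elliptic equation $\Delta\psi_y+\omega'(\psi)\psi_y=0$. For any point $(x_0,y_0)\in\Omega_\eta^{\delta_1}$ at distance $\rho_0$ from the crest, the rescaling $(X,Y)\mapsto((x-x_0)/\rho_0,(y-y_0)/\rho_0)$ produces an elliptic problem on a set of unit size with bounded coefficients, and standard boundary Schauder estimates propagate the $\rho_0^{1/2}$ boundary bound into the interior.

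For the lower bound I would introduce the rescaled stream function $\psi_\lambda(X,Y):=\lambda^{-3/2}\psi(\lambda X,r+\lambda(Y-1))$ for $\lambda\in(0,\delta_1]$, so that $\partial_Y\psi_\lambda(X,Y)=\lambda^{-1/2}\psi_y(\lambda X,r+\lambda(Y-1))$ and $\Delta\psi_\lambda=-\lambda^{1/2}\omega(\psi)\to 0$ uniformly as $\lambda\to 0^+$. By \eqref{eta120}, the rescaled surface $Y=1+(\eta(\lambda X)-r)/\lambda$ converges locally uniformly to the $120^\circ$ wedge boundary $Y=1-|X|/\sqrt{3}$, while the Bernoulli condition becomes $\tfrac12|\nabla\psi_\lambda|^2=1-Y$ there. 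Combined with the uniform $C^{1,1/2}$ bound supplied by Theorem \ref{thmreg}, a subsequence converges in $C^1_{\mathrm{loc}}$ to an even, harmonic function $\psi_\star$ in the wedge satisfying the Stokes corner boundary conditions with $\psi_\star=1$ on the surface. This characterises $\psi_\star$ uniquely as the classical Stokes corner flow, for which an explicit computation gives $\partial_Y\psi_\star\ge c_0 R^{1/2}$ on any annulus $\{\tfrac12\le R\le 1\}$ inside the wedge, where $R=|(X,Y-1)|$. Translating back and covering $\Omega_\eta^{\delta_1}$ by such rescaled annular shells centred at the crest yields $\psi_y(x,y)\ge C_1\rho^{1/2}$.

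The main obstacle is proving that the blow-up limit $\psi_\star$ is nondegenerate, that is, ruling out the trivial possibility $\psi_\star\equiv 0$. This amounts to establishing an a priori nondegeneracy bound $\sup_{B_\rho(0,r)}|\nabla\psi|\ge c\rho^{1/2}$ independent of $\rho$. A clean route is via a Weiss-type monotonicity formula for the free-boundary functional associated with \eqref{sysstream}, in the spirit of the Varvaruca--Weiss analysis. Alternatively, one can exploit the Bernoulli condition directly: $|\nabla\psi|^2=2(r-\eta(x))\sim 2x/\sqrt{3}$ on the surface near the crest by \eqref{eta120}, which supplies the required quantitative lower bound and is incompatible with $\psi_\star\equiv 0$.
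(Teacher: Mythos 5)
The paper's argument is short and quite different: the \emph{lower} bound is propagated from the surface (where it follows from Bernoulli) into the interior by the weak Harnack inequality and the Harnack principle applied to $\psi_y$, which solves $\Delta\psi_y+\omega'(\psi)\psi_y=0$; the \emph{upper} bound is obtained by a maximum-principle barrier $f=\psi_y^2-A(r-y)+B(r-y)^2$, whose Laplacian is nonnegative by the choice of $B$, so $f\le 0$ once it is nonpositive on the boundary. Your proposal heads in a different and, as written, problematic direction, for two reasons.

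First, your lower bound is circular. You appeal to Theorem~\ref{thmreg} to supply the uniform $C^{1,1/2}$ bound that makes the rescaled family $\psi_\lambda$ compact. But Theorem~\ref{thmreg} is proved in the paper \emph{after} Lemma~\ref{l:psiybot}, and its proof (via Lemma~\ref{lemmaboundary}, the partial hodograph transform, and the resulting uniform ellipticity of the transformed equation) relies essentially on the two-sided estimate~\eqref{psiybot}. Without the lower half of~\eqref{psiybot} the hodograph map degenerates and the uniform $C^{1,1/2}$ control is unavailable, so a blow-up/compactness argument cannot be bootstrapped from it. Even setting aside the circularity, a correct compactness argument would still need to establish that the blow-up limit is homogeneous (otherwise uniqueness of the Stokes corner flow is not enough to pin it down), which is exactly the role the Weiss monotonicity formula plays in Varvaruca--Weiss; your ``alternative'' Bernoulli-based nondegeneracy only rules out $\psi_\star\equiv\mathrm{const}$, not inhomogeneous limits.

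Second, the upper bound as you describe it does not close. Schauder estimates, after rescaling, control H\"older seminorms of $\psi_y$ in terms of the coefficients, the boundary data \emph{and} the $L^\infty$ norm of $\psi_y$ on the rescaled domain; they do not propagate a sup bound from the free surface into the interior. The scale-invariant quantity here is $\rho^{-1/2}\psi_y$, and the a priori $C^1$ regularity of $\psi$ gives only a fixed constant bound on $\psi_y$, which under the $\rho^{1/2}$ rescaling blows up like $\rho^{-1/2}$; so the required uniform $L^\infty$ input is precisely what is being proved. One genuinely needs a maximum-principle argument (as the paper's barrier $f$) to push the boundary bound into the interior, and the choice of $B$ making $\Delta f\ge 2|\nabla\psi_y|^2\ge 0$ is the crucial observation that your sketch is missing.
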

\begin{proof}
	Note that \eqref{psiybot} holds true along the surface in $\Omega_\eta^{\delta}$, which is a direct consequence of the Bernoulli equation \eqref{sys:stream:bern} and \eqref{eta120}. Now let $0 < x < \delta_1$ be given, where $\delta_1 > 0$ is such that
	\[
		\tfrac1{2\sqrt{3}} < |\eta_x| < \tfrac2{\sqrt{3}}
	\]
	for all $|x| < \delta_1$. Then we put $R = \tfrac{1}{10}x$ and apply Theorem 8.26 in \cite{GilbargTrudinger01} in $B_{\rho}(x,\eta(x))$ for the function $\psi_y$ solving
	\[
		\Delta \psi_y + \omega'(\psi) \psi_y = 0.
	\]
	This shows that the left inequality in \eqref{psiybot} is valid in $B_{R}(x,\eta(x))$ for any $|x| < \delta_1$. For an interior estimate one can use the Harnack principle from Theorem 8.20 in \cite{GilbargTrudinger01}. It remains to prove the upper bound for $\psi_y$. For that purpose we consider the function
	\[
		f= \psi_y^2 - A (r-y) + B (r-y)^2,
	\]
	where 
	\[
	B = \sup_{\Omega_\eta^{\delta_1}} |\psi_y|^2 |\omega'(\psi)|, \ \ \ A = \sup_{\partial \Omega_\eta^{\delta_1}} \psi_y^2 (r-y)^{-1} + B(r-y).
	\]
	The choice of $B$ implies that $\Delta f \geq 2 |\nabla \psi_y|^2 \geq 0$ so that $f$ attains its maximum at the boundary of $\Omega_\eta^{\delta_1}$, where $f$ is nonpositive by the choice of $A$. Note that the constant $A$ depends on $\delta_1$ and might be large though it is finite since \eqref{psiybot} is valid along the upper boundary. Thus, by the maximum principle we have $f \leq 0$ in $\Omega_\eta^{\delta_1}$ so that 
	\[
		\psi_y^2 \leq A y \leq C A \rho \ \ \text{in} \ \ \Omega_\eta^{\delta_1}.
	\]
	This finished the proof of the lemma.
\end{proof}
\begin{figure}[t!]
	\centering
	\includegraphics[scale=0.8]{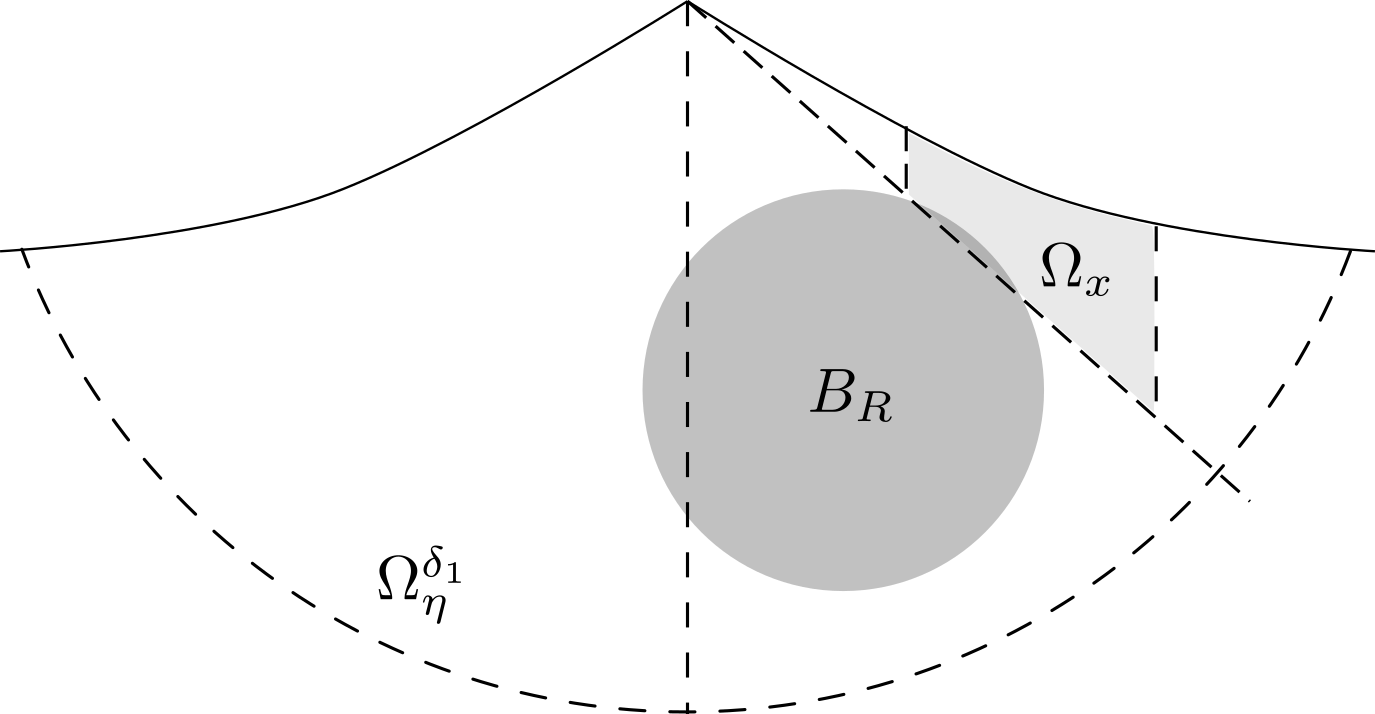} \caption{Domains $\Omega_x$ and $B_R$}
	\label{fig:polar}
\end{figure}
Now using Lemma \ref{l:psiybot} we can obtain local estimates in the domain
\[
\Omega_x = \{ (x',y'): \in \Omega_\eta^{ \delta_1}: \ \ \tfrac12 x < x' < \tfrac32 x, \ \ y' > r- \tfrac{4}{\sqrt{3}}x' \},
\]
where $\delta_1$ is the constant from Lemma \ref{l:psiybot} and $0 < x < \tfrac23 \delta_1$ (see Figure 1). Note that
\begin{equation} \label{xrho}
	C_1 x  < \rho < C_2 x
\end{equation}
for all $(x,y) \in \Omega_{x}$ with some absolute constants $C_1,C_2>0$.

\begin{lemma} \label{lemmaboundary}
	There exist $C>0$ and $x_0 > 0$ such that
	\[ 
	|\nabla \psi (x_1,y_1)-\nabla \psi (x_2,y_2)| \leq C x^{-\tfrac12} (|x_1-x_2|+|y_1-y_2|)
	\]
	for all $0<x<x_0$ and for all $(x_1,y_1),(x_2,y_2) \in \Omega_x$. The constant $C$ is independent of $x$. Furthermore, we have
	\begin{equation}\label{c2gammapsi}
		\|\psi\|_{C^{2,\gamma}(\overline{\Omega_x})} \leq C x^{-\left(\tfrac12 + \gamma\right)}
	\end{equation}
	for $0<x<x_0$ and any $\gamma \in (0,1)$.
\end{lemma}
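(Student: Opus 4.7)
The bound has the form of a scaling-invariant estimate, which strongly suggests a blow-up argument reducing the question to classical Schauder theory on a fixed reference domain. Given $0 < x < x_0$, I would introduce the rescaled coordinates $X = x'/x$, $Y = (y'-r)/x$ and the rescaled stream function
\[
\tilde{w}(X,Y) = \frac{1 - \psi(xX,\, r + xY)}{x^{3/2}}.
\]
Under this change of variables $\Omega_x$ maps to a domain $\tilde{\Omega}_x \subset \{1/2 < X < 3/2\}$ whose top boundary is the curve $Y = \tilde{\eta}(X) := (\eta(xX) - r)/x$ and whose lower part is the line $Y = -(4/\sqrt{3})X$. Since $\eta_x \in C^1([0,\delta])$, one has $\tilde{\eta}_X(X) = \eta_x(xX)$ and $\tilde{\eta}_{XX}(X) = x\,\eta_{xx}(xX)$, so the rescaled surface has $C^{1,1}$ norm bounded uniformly in $x$ (the second derivative in fact tending to zero), and, by \eqref{eta120}, $\tilde{\Omega}_x$ converges to a fixed non-degenerate Lipschitz sector whose top and bottom boundaries are uniformly separated.

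In these variables the governing equation becomes
\[
\Delta_{X,Y}\tilde{w} = x^{1/2}\,\omega\bigl(1 - x^{3/2}\tilde{w}\bigr),
\]
with right-hand side bounded in $L^\infty$ uniformly in $x$, and the boundary condition $\psi = 1$ on $y = \eta$ becomes the Dirichlet condition $\tilde{w} = 0$ on $Y = \tilde{\eta}(X)$. Lemma \ref{l:psiybot} combined with \eqref{xrho} gives $|\nabla\psi| \leq C\,x^{1/2}$ on $\Omega_x$, i.e.\ $|\nabla_{X,Y}\tilde{w}| \leq C$ uniformly in $x$; integrating along vertical segments from the top boundary (where $\tilde{w} = 0$) yields $|\tilde{w}| \leq C$ as well.

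At this point the uniform bound $\|\tilde{w}\|_{C^{2,\gamma}(\tilde{\Omega}_x)} \leq C$ follows from classical Schauder theory: interior $C^{2,\gamma}$ estimates on small balls handle points away from the top boundary, while boundary Schauder estimates for the Dirichlet problem on a $C^{1,\alpha}$ arc handle points near the top. To avoid imposing spurious conditions on the vertical and lower portions of $\partial\tilde{\Omega}_x$ I would carry out the estimate on a slightly enlarged rescaled subdomain still contained in the rescaled image of $\Omega_\eta^{\delta_1}$, and then restrict. Unscaling via $D^k\psi = x^{3/2-k}\,D^k\tilde{w}$ immediately gives $|D^2\psi| \leq C x^{-1/2}$, hence the Lipschitz bound for $\nabla\psi$, while the Hölder seminorm of $D^2\psi$ picks up an extra factor $x^{-\gamma}$, producing exactly \eqref{c2gammapsi}.

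The main difficulty I anticipate is arranging for the Schauder constants to be uniform in $x$. This reduces to controlling the rescaled boundary geometry independently of $x$: \eqref{eta120} together with $\eta_x \in C^1$ delivers a uniform $C^{1,1}$ bound on $\tilde{\eta}$ (in fact flattening as $x \to 0$), while the explicit shape of $\Omega_x$ keeps the rescaled domains uniformly Lipschitz with uniformly separated top and bottom boundary components. Apart from this geometric bookkeeping, the argument relies only on the $L^\infty$ bounds on $\tilde{w}$ and its gradient and on the boundedness of $\omega$ near $\psi=1$, both of which are already in hand from the previous lemma.
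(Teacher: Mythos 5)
Your scaling and unscaling arithmetic are correct, and the strategy of reducing to a fixed reference configuration by blow-up is the right instinct. But the proposal has a genuine gap: you treat the problem as a \emph{fixed-boundary Dirichlet problem}, using only $\psi=1$ on $y=\eta$, and dropping the Bernoulli condition \eqref{sys:stream:bern} entirely. With only the Dirichlet condition and a fixed curve $y=\eta(x)$, obtaining $C^{2,\gamma}$ regularity of $\psi$ up to the top boundary via Schauder theory requires the boundary itself to be of class $C^{2,\gamma}$. The standing hypothesis is only $\eta_x\in C^1([0,\delta])$ — i.e.\ $\eta_{xx}$ is merely continuous. After rescaling, $\tilde\eta_{XX}(X)=x\,\eta_{xx}(xX)$ is bounded (even small), but its $\gamma$-Hölder seminorm is $\sup x\,|\eta_{xx}(xX_1)-\eta_{xx}(xX_2)|/|X_1-X_2|^\gamma$, which is \emph{not} bounded uniformly in $x$ when $\eta_{xx}$ is only continuous: taking $|X_1-X_2|\to 0$ you can only control the numerator by the modulus of continuity $\omega_{\eta_{xx}}(x|X_1-X_2|)$, which does not beat the denominator $|X_1-X_2|^\gamma$. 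So the $C^{2,\gamma}$ Schauder constants on $\tilde\Omega_x$ are not uniform, and the claimed bound $\|\tilde w\|_{C^{2,\gamma}(\tilde\Omega_x)}\le C$ does not follow. In fact, trying to quote regularity for $\eta$ beyond $C^2$ is circular here, because such regularity for the free boundary is essentially what is being deduced.

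The paper sidesteps this exactly by \emph{not} treating $\eta$ as fixed boundary data. It applies the partial hodograph transform $q=x$, $p=1-\psi$, $h(q,p)=r-y$, under which the free surface becomes the flat line $p=0$, so boundary regularity is no longer an input. The Bernoulli equation \eqref{sys:stream:bern} then appears as a fully nonlinear \emph{boundary condition} on $p=0$ (after scaling, $\tfrac{1+\hat h_{\hat q}^2}{2\hat h_{\hat p}^2}-\hat h=0$), the interior equation is a uniformly elliptic quasilinear equation for $\hat h$ (uniform ellipticity here is exactly what Lemma~\ref{l:psiybot} buys, since $\hat h_{\hat p}=x^{1/2}/\psi_y$ is bounded above and below), and Lieberman's Theorem~1.1 for quasilinear problems with nonlinear oblique boundary conditions gives a uniform $C^2$ bound on $\hat h$ with constants independent of $x$. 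The regularity of $\psi$ \emph{and} of $\eta$ then fall out together by unscaling. So the hodograph step and the use of the Bernoulli condition are not technical conveniences — they are the ingredients your proposal is missing.
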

\begin{proof}
	The idea of the proof is to perform the partial hodograph transform in $\Omega_x$ obtaining a nonlinear equation with good scaling properties. More precisely, we put
	\[
	q = x, \ \ p = 1-\psi(x,y), \ \ h(q,p) = r-y.
	\]
	A direct computation shows that $h$ solves an elliptic equation
	\[
	\frac{1+h_q^2}{h_p^2} h_{pp}-2 \frac{h_q}{h_p} h_{qp}+h_{qq}+h_p \omega(1-p) = 0 \ \ \text{in} \ \ S_{x},
	\]
	where $S_x$ is the corresponding to $\Omega_x$ domain in the $(q,p)$-variables given by
	\[
	S_x = \{(q,p): \tfrac12 x < q < \tfrac32 x, \ \ C(q) x^{\tfrac32} < p < 0 \},
	\]
	where $C_1 < C(q) < C_2$ for some absolute constants $C_1,C_2 > 0$. Thus, we scale variables as follows:
	\[
	q = x \hat{q}, \ \ p = x^{\tfrac32} \hat{p}, \ \ h(q,p) = x \hat{h}(\hat{q},\hat{p}).
	\]
	Another computation gives
	\[
	h_p = x^{-\tfrac12} \hat{h}_{\hat{p}}, \ \ h_q = \hat{h}_{\hat{q}}, \ \ h_{pp} =x^{-2} \hat{h}_{\hat{p}\hat{p}}, \ \ h_{qq} = x^{-1} \hat{h}_{\hat{q}\hat{q}},
	\]
	so that $\hat{h}$ solves
	\[
	\frac{1+\hat{h}_{\hat{q}}^2}{\hat{h}_{\hat{p}}^2} \hat{h}_{\hat{p}\hat{p}}-2 \frac{\hat{h}_{\hat{q}}}{\hat{h}_{\hat{p}}} \hat{h}_{\hat{q}\hat{p}}+\hat{h}_{\hat{q}\hat{q}}+x^{\tfrac12} \hat{h}_{\hat{p}} \omega(1-x^{\tfrac32}\hat{p}) = 0 \ \ \text{in} \ \ \hat{S}_{x}.
	\]
	Note that 
	\[
	C_1 < \hat{h}, \hat{h}_{\hat{p}} < C_2 \ \ \text{in} \ \ \hat{S}_{x}
	\]
	for some constants $C_1,C_2 > 0$, which follows from Lemma \ref{l:psiybot} since $\hat{h}_p = x^{\tfrac12}\psi_y^{-1}$ and $x$ is comparable with $\rho$ by \eqref{xrho}. Along the upper boundary of $\hat{S}_{x}$ we have 
	\[
	\frac{1+\hat{h}_{\hat{q}}^2}{2\hat{h}_{\hat{p}}^2} - \hat{h} = 0,  \ \ \hat{p} = 0,
	\]
	which follows from the Bernoulli equation after the scaling. Thus, $\hat{h}$ solves a uniformly elliptic nonlinear boundary problem and from Theorem 1.1 \cite{Lieberman1986} we conclude that
	\begin{equation}\label{hhat}	
		\|\hat{h}\|_{C^2(\overline{\hat{S}_x})} \leq C
	\end{equation}
	for some constant $C>0$ independent of $x$ (we even get a higher regularity by it is not essential for our purposes). Now we can estimate
	\[
	\begin{split}
	|\psi_y(x_1,y_1)-\psi_y(x_2,y_2)| & \leq C x |h_p(q_1,p_1)-h_p(q_2,p_2)| \leq C x^{\tfrac12} |\hat{h}_{\hat{p}}(\hat{q}_1,\hat{p}_1)-\hat{h}_{\hat{p}}(\hat{q}_2,\hat{p}_2)| \\
	& \leq C x^{\tfrac12} (|\hat{q}_1-\hat{q}_2|+|\hat{p}_1-\hat{p}_2|) \leq C x^{-\tfrac12} (|x_1-x_2|+|y_1-y_2|)
	\end{split}		
	\]
	for all $(x_1,y_1),(x_2,y_2) \in \Omega_x$. Similarly, we obtain
	\[
	|\psi_x(x_1,y_1)-\psi_x(x_2,y_2)| \leq C x^{-\tfrac12} (|x_1-x_2|+|y_1-y_2|).
	\]
	The remaining inequality \eqref{c2gammapsi} can be obtained the same way from \eqref{hhat}. This finishes the proof.
\end{proof}

Now we consider an interior region $B_{R} \subset \Omega_\eta^\delta$, where $B_R$ is a ball of radius $R$ centred at $(0,y_R) \in \Omega_\eta^{\delta_1}$, where $r-y_R > \tfrac65 R$ (see Figure 1). Thus, the distance from $B_R$ to the upper boundary of $\Omega_\eta^\delta$ is of order $R$. From Lemma \ref{l:psiybot} we find that 
\begin{equation}\label{psiyB1}
C_1 R^{\tfrac32} < 1-\psi < C_2 R^{\tfrac32} \ \ \text{in} \ \ B_R.
\end{equation}
Next we perform the following scaling of variables in $B_R$:
\[
x = R \hat{x}, \ \ y = r - R \hat{y}, \ \ \psi(x,y) = 1-R^{\tfrac32} \hat{\psi}(\hat{x},\hat{y}).
\]
Thus, the new function $\hat{\psi}$ is defined in a ball $\hat{B}$ of radius $1$ and satisfy
\[
C_1 < \hat{\psi} < C_2 \ \ \text{in} \ \ \hat{B}
\]
where $C_1,C_2 > 0$ are constants independent of $R$, which follows from \eqref{psiyB1}. Now the classical elliptic theory yields that 
\[
\|\hat{\psi}\|_{C^{2,\gamma}(\hat{B})} \leq C.
\]
Scaling back, we obtain
\begin{equation} \label{gradint}
|\nabla \psi (x_1,y_1)-\nabla \psi (x_2,y_2)| \leq C R^{-\tfrac12} (|x_1-x_2|+|y_1-y_2|)
\end{equation}
for all $(x_1,y_1),(x_2,y_2) \in B_R$. Furthermore, we have
\begin{equation}\label{c2gammaball}
	\|\psi\|_{C^{2,\gamma}(B_R)} \leq C R^{-\tfrac12-\gamma}.
\end{equation}

Now we can complete the proof of Theorem \ref{thmreg} by establishing interior estimates. Let us consider two points $(x_1,y_1),(x_2,y_2) \in \Omega_\eta^{\delta_1}$. Let $\rho_j$ be the distance from $(x_j,y_j)$ to $(0,r)$ and let $\rho_0 = \min(\rho_1,\rho_2)$. Furthermore, we put $\rho_{12}$ to be the distance between $(x_1,y_1)$ and $(x_2,y_2)$. If $\rho_0 < \rho_{12}$, then we obtain
\[
|\psi_y (x_1,y_1)-\psi_y (x_2,y_2)| \leq 	|\psi_y (x_1,y_1)|+|\psi_y (x_2,y_2)| \leq C \rho_{12}^{\tfrac12}
\]
by \eqref{psiybot} as desired. Assume that $\rho_0 > \rho_{12}$. Then without loss of generality we can assume that both points $(x_1,y_1)$ and $(x_2,y_2)$ belong to one of the regions studied above: $B_{\rho_0}$ or $\Omega_{x}$, where $C_1 \rho_0 < |x| < C_2 \rho_0$. In both cases we conclude from \eqref{gradint} and Lemma \ref{lemmaboundary} that
\[
|\psi_y (x_1,y_1)-\psi_y (x_2,y_2)| \leq C \rho_0^{-\tfrac12} \rho_{12} \leq C \rho_{12}^{\tfrac12}.
\]
In a similar way one obtains an estimate for $\psi_x$.  The only difference is in the case when $\rho_0 < \rho_{12}$. We need to show that $|\psi_x(x_j,y_j)| < C \rho_j^{\tfrac12}$. But this follows from the previous interior and boundary estimates \eqref{gradint} and Lemma \ref{lemmaboundary} which imply that $|\psi_x| \leq C \psi_y$. This finishes the proof of the theorem.

\section{Reformulation of the problem}
\subsection{Logarithmic transformation}

\begin{figure}[t!]
	\centering
	\includegraphics[scale=0.8]{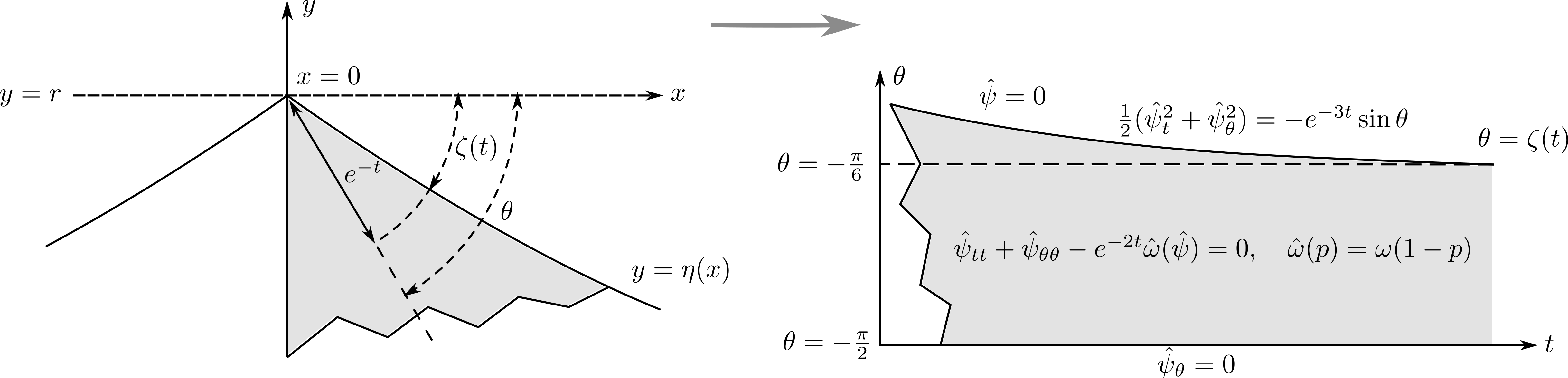} \caption{Transformation of the fluid domain near a stagnation point into a curved half-strip $\hat{S}$; the vertical $x=0$ becomes the bottom boundary $\theta=-\tfrac{\pi}{2}$ and the surface $y = \eta(x)$ turns into $\theta = \zeta(t)$.}
	\label{fig:polar}
\end{figure}

The problem \eqref{sysstream} in a corner can be transformed into the one in a strip through the following logarithmic transformation:
\[
x = e^{-t} \cos \theta, \ \ y = r+e^{-t} \sin \theta, \ \ t > -\ln \delta_1, \ \ -\tfrac{\pi}{2}<\theta < \zeta(t).
\]
Under this transformation the corresponding part of the fluid domain becomes a curved half-strip $\hat{S}$, infinite from the right; see Figure \ref{fig:polar}. The upper boundary of the "strip" corresponds to the surface profile, while the flat bottom is the image of the vertical line $x=0$. The new unknown function is
\[
\hat{\psi}(t,\theta) = 1-\psi(x,y), 
\]
which solves
\[
\hat{\psi}_{tt} + \hat{\psi}_{\theta\theta} - e^{-2t} \hat{\omega}(\hat{\psi}) = 0 \ \ \text{in} \ \ \hat{S}= \{ (t,\theta) \in \R^2:  t > -\ln\delta_1, \ \ -\pitwo < \theta < \zeta(t) \},
\]
where $\hat{\omega}(p) = \omega(1-p)$. The latter is a direct consequence of \eqref{sys:stream:lap}. According to our definitions we have
\begin{equation}\label{psi0}
	\hat{\psi} = 0 \ \ \text{on} \ \ \theta = \zeta,
\end{equation}
while the formula $\hat{\psi}_\theta = e^{-t} \sin\theta \psi_x - e^{-t} \cos\theta \psi_y$ shows that
\[
	\hat{\psi}_\theta = 0 \ \ \text{on} \ \ \theta = - \pitwo.
\]
On the surface $\theta = \zeta(t)$ we also have 
\begin{equation}\label{bern1}
\tfrac12 (\hat{\psi}_t^2 + \hat{\psi}_{\theta}^2) = -e^{-3t}\sin\theta \ \ \text{on} \ \ \theta = \zeta(t),
\end{equation}
which is a direct consequence of \eqref{sys:stream:kintop} and \eqref{sys:stream:bern}. Beside the boundary relations from above we also know from \eqref{eta120} that
\begin{equation}\label{zetalim}
\zeta(t) \to -\tfrac16 \pi \ \ \text{as} \ \ t \to +\infty.
\end{equation}
Furthermore, from \eqref{psi0} we find
\begin{equation} \label{xieta}
\zeta_t = - \frac{\hat{\psi}_t}{\hat{\psi}_{\theta}} = - \frac{\psi_x \cos{\zeta} + \psi_y \sin{\zeta}}{\psi_x \sin{\zeta} - \psi_y \cos{\zeta}} = - \frac{\eta_x \cos{\zeta}-\sin{\zeta}}{\eta_x \sin{\zeta} + \cos{\zeta}}.
\end{equation}
Thus, from \eqref{zetalim} and \eqref{eta120} we conclude that
\begin{equation} \label{xit}
\zeta_t \to 0 \ \ \text{as} \ \ t \to +\infty.
\end{equation}
Using \eqref{xieta} we can recover the surface profile $\eta$ as
\begin{equation}\label{extazeta}
	\eta_x = \frac{\sin\zeta - \zeta_t \cos\zeta}{\cos\zeta + \zeta_t \sin\zeta},
\end{equation}
which will be useful later.

A direct consequence of Theorem \ref{thmreg} is the following

\begin{lemma}\label{lemmahatpsi} For any $\gamma \in (0,1)$ there exists $C>0$ such that the inequality
	\[
		\|\hat{\psi}\|_{C^{2,\gamma}(\hat{S}_t)} \leq C e^{-\tfrac32 t},
	\]
is valid for all $t \geq -\ln{\delta_1}$, where $\hat{S}_t = \{(t',\theta) \in \overline{\hat{S}}: \ t \leq t' \leq t+1 \}$. 
\end{lemma}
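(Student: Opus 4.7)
The strategy is a blow-up at the crest. For a fixed $t \geq -\ln\delta_1$, introduce rescaled Cartesian variables and a rescaled unknown,
\[
	\tilde{x} = e^t x, \qquad \tilde{y} = e^t(y-r), \qquad \tilde{\psi}(\tilde{x},\tilde{y}) = e^{\tfrac32 t}\bigl(1-\psi(e^{-t}\tilde{x},\, r+e^{-t}\tilde{y})\bigr).
\]
Setting $s = t' - t \in [0,1]$, the logarithmic change of variables of Section 3.1 takes the form
\[
	\hat{\psi}(t',\theta) = e^{-\tfrac32 t}\, \tilde{\psi}\bigl(e^{-s}\cos\theta,\, e^{-s}\sin\theta\bigr),
\]
and the image of $\hat{S}_t$ in the $(\tilde{x},\tilde{y})$-plane is an annular sector with inner radius $e^{-1}$ and outer radius $1$, independent of $t$. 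The lemma will follow once we show that $\tilde{\psi}$ is uniformly $C^{2,\gamma}$ bounded on this fixed reference region.

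Such a uniform bound is essentially the content of Theorem \ref{thmreg}, repackaged by scaling. The reference annular sector can be covered by a $t$-independent number of images, under the blow-up, of the boundary regions $\Omega_x$ with $x\sim e^{-t}$ and of interior balls $B_R$ with $R\sim e^{-t}$ from the proof of Theorem \ref{thmreg}. On each such piece, \eqref{c2gammapsi} or \eqref{c2gammaball} yields $\|\psi\|_{C^{2,\gamma}}\le C\, e^{t(1/2+\gamma)}$. Under the rescaling each $(\tilde{x},\tilde{y})$-derivative produces a factor $e^{-t}$, and the $C^{0,\gamma}$ seminorm in the $(\tilde{x},\tilde{y})$-metric picks up an additional factor $e^{-t\gamma}$ relative to the $(x,y)$-metric; together with the prefactor $e^{3t/2}$ in the definition of $\tilde{\psi}$, the powers of $e^t$ cancel exactly, and one obtains $\|\tilde{\psi}\|_{C^{2,\gamma}}\le C$ with $C$ independent of $t$.

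To conclude, observe that $(s,\theta)\mapsto(e^{-s}\cos\theta,e^{-s}\sin\theta)$ is smooth with derivatives bounded uniformly on $s\in[0,1]$ and on a bounded $\theta$-interval, the latter being guaranteed by \eqref{zetalim}. Applying the chain rule to the displayed identity for $\hat{\psi}$ then gives
\[
	\|\hat{\psi}\|_{C^{2,\gamma}(\hat{S}_t)} \le C\, e^{-\tfrac32 t}\, \|\tilde{\psi}\|_{C^{2,\gamma}} \le C\, e^{-\tfrac32 t},
\]
which is the required estimate. The only delicate point is the power-counting in the middle step, but this is pure bookkeeping: all the substantive regularity has already been obtained in Theorem \ref{thmreg}, and the present lemma is its immediate corollary once the scaling is arranged.
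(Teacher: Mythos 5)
Your blow-up argument is correct in outline but organised differently from the paper's own proof. The paper works directly in the logarithmic variables: it uses the explicit identities $\hat\psi_\theta = e^{-t}(\sin\theta\,\psi_x - \cos\theta\,\psi_y)$, $\hat\psi_t = e^{-t}(\cos\theta\,\psi_x + \sin\theta\,\psi_y)$ from \eqref{hatgradpsi} together with the global $C^{1,1/2}$ bound of Theorem~\ref{thmreg} to estimate $\nabla\hat\psi$ and its H\"older increments by the chain rule, and then appeals to Lemma~\ref{lemmaboundary} and \eqref{c2gammaball} for the second-order terms. You instead blow up first in Cartesian coordinates to a $t$-independent annular reference region, prove a uniform $C^{2,\gamma}$ bound for $\tilde\psi$ there, and compose with the fixed smooth map $(s,\theta)\mapsto(e^{-s}\cos\theta,e^{-s}\sin\theta)$. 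The two proofs draw on the same local estimates, but your version makes the scale invariance manifest and confines all chain-rule bookkeeping to a single fixed region, whereas the paper's is more explicit about which estimate enters at each derivative order.

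One point deserves a flag: the assertion that ``the powers of $e^t$ cancel exactly'' when the full-norm bound $\|\psi\|_{C^{2,\gamma}}\le C e^{(1/2+\gamma)t}$ from \eqref{c2gammapsi}/\eqref{c2gammaball} is fed into the rescaling is only correct for the top-order H\"older seminorm $[D^2\tilde\psi]_{\gamma}$. For the lower-order sup-norms the same bookkeeping fails; for instance $\|\nabla\tilde\psi\|_{\infty}=e^{t/2}\|\nabla\psi\|_{\infty}$, and the full $C^{2,\gamma}$ bound gives only $\|\nabla\psi\|_{\infty}\le Ce^{(1/2+\gamma)t}$, which is far too weak, and similarly $\|\tilde\psi\|_\infty = e^{3t/2}\|1-\psi\|_\infty$ is not controlled this way at all. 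To close the argument one must invoke the sharper order-by-order estimates that are already available in the paper: $1-\psi\le C\rho^{3/2}$ from Lemma~\ref{l:psiybot}, $|\nabla\psi|\le C\rho^{1/2}$ from Theorem~\ref{thmreg} and the stagnation condition, the Lipschitz estimate $|\nabla\psi(x_1,y_1)-\nabla\psi(x_2,y_2)|\le C\rho^{-1/2}|(x_1,y_1)-(x_2,y_2)|$ from Lemma~\ref{lemmaboundary} and \eqref{gradint} for $\|D^2\psi\|_\infty$, and the seminorm part of \eqref{c2gammapsi}/\eqref{c2gammaball} for $[D^2\psi]_\gamma$. With these substituted, each contribution to $\|\tilde\psi\|_{C^{2,\gamma}}$ is bounded uniformly in $t$ and your conclusion follows; as written, though, labelling this ``pure bookkeeping'' understates that the full-norm bound alone is insufficient.
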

\begin{proof}
	It follows from definitions that
	\begin{equation}\label{hatgradpsi}
		\hat{\psi}_\theta = e^{-t} \sin\theta \psi_x - e^{-t} \cos\theta \psi_y, \ \ \hat{\psi}_t = e^{-t} \cos\theta \psi_x +  e^{-t} \sin\theta \psi_y.
	\end{equation}
	On the other hand, by Theorem \ref{thmreg} since $\nabla \psi(0,r) = (0,0)$ we conclude
	\[
		|\nabla \psi| \leq C \rho^{\tfrac12} = C e^{-\tfrac12 t}.
	\]
	Combining this with \eqref{hatgradpsi} we obtain 
	\[
		|\nabla \hat{\psi}| \leq C e^{-\tfrac32 t} \ \ \text{in} \ \ \hat{S}.
	\]
	Now let $(t_1,\theta_1), (t_2,\theta_2) \in \hat{S}$ be given such that $t \leq t_1<t_2 \leq t+1$. Then from \eqref{hatgradpsi} we find
	\[
		\begin{split}	
			|\nabla \hat{\psi}(t_1,\theta_1) - \nabla \hat{\psi}(t_2,\theta_2)| & \leq C e^{-\tfrac32}(|t_1 - t_2|+|\theta_1-\theta_2|) + C e^{-t}|\nabla \psi(x_1,y_1) - \nabla \psi(x_2,y_2)| \\
			& \leq C e^{-\tfrac32}(|t_1 - t_2|+|\theta_1-\theta_2|) + C e^{-\tfrac32}(|t_1 - t_2|+|\theta_1-\theta_2|)^{\tfrac12} \\
			& \leq C e^{-\tfrac32}(|t_1 - t_2|+|\theta_1-\theta_2|)^{\tfrac12}.
		\end{split}
	\]
	A similar argument is valid for the second-order derivatives and the claim follows from Lemma \ref{lemmaboundary} and \eqref{c2gammaball}.  This finishes the proof.
\end{proof}

\subsection{Flattening of the domain}

The flattening transformation
\[
q = t, \ \ z = \pitwo \frac{\theta + \pitwo}{\zeta(t)+\pitwo}
\]
maps $\hat{S}$ onto the half-strip $S = \{(q,z) \in \R^2: \ \ q \geq -\ln\delta_1, \ \ 0 < z < \pitwo \}$. Thus, the corresponding stream function in new variables is
\[
\bar{\psi}(q,z) = \hat{\psi}(t,\theta),
\]
which solves
\begin{align}
& \left[ \bar{\psi}_q - \frac{z \zeta_q}{\zeta+\pitwo} \bar{\psi}_z \right]_{q} - \frac{z \zeta_q}{\zeta+\pitwo} \left[ \bar{\psi}_q - \frac{z \zeta_q}{\zeta+\pitwo} \bar{\psi}_z \right]_z + \left( \frac{\pitwo}{\zeta+\pitwo} \right)^2 \bar{\psi}_{zz} - e^{-2q}\hat{\omega}(\bar{\psi}) = 0 \ \ \text{in} \ S, \label{sys:psibar:lapp} \\
& \ \bar{\psi} = 0 \ \ \text{on} \ \ z=\pitwo; \ \ \bar{\psi}_z = 0 \ \ \text{on} \ \ z=0. \label{sys:psibar:kin}
\end{align}
The remaining nonlinear boundary relation \eqref{bern1} becomes
\begin{equation} \label{sys:psibar:bern}
\left[ \bar{\psi}_q - \frac{z \zeta_q}{\zeta+\pitwo} \bar{\psi}_z \right]^2 + \left( \frac{\pitwo}{\zeta+\pitwo} \right)^2 \bar{\psi}_z^2 = - 2 e^{-3q} \sin(\zeta) \ \ \text{on} \ \ z=\pitwo.
\end{equation}
In order to reformulate this problem as a first-order system, we introduce an axillary function
\begin{equation} \label{hatPsi}
\hat{\Psi} = \frac{\zeta+\pitwo}{\pitwo} \left[ \bar{\psi}_q - \frac{z\zeta_q}{\zeta+\pitwo} \bar{\psi}_z \right].
\end{equation}
Thus, we can rewrite \eqref{sys:psibar:lapp} as
\begin{align}
& \bar{\psi}_q = \frac{\pitwo}{\zeta+\pitwo}\hat{\Psi} + \frac{z}{\zeta+\pitwo} \zeta_q \bar{\psi}_z,  \label{sys:barpsi:barpsiq}\\
& \hat{\Psi}_q =  \frac {1}{\zeta+\pitwo}\zeta_q (z\hat{\Psi})_z - \frac{\pitwo}{\zeta+\pitwo} \bar{\psi}_{zz} + e^{-2q}\frac{\zeta+\pitwo}{\pitwo} \hat{\omega}(\bar{\psi}) \label{sys:barpsi:hatpsiq}.
\end{align}
Note that we consider $\zeta$ and $\zeta_q$ as coefficients, so that the presence of $q$-derivative of $\zeta$ is not a problem. 

The boundary condition \eqref{sys:psibar:bern} becomes
\begin{equation} \label{hatPsi:bern}
\hat{\Psi}^2 + \bar{\psi}_z^2 = -2\left( \frac{\zeta+\pitwo}{\pitwo} \right)^2 e^{-3q} \sin(\zeta) \ \ \text{on} \ \ z = \pitwo
\end{equation}
and \eqref{sys:psibar:kin} turns into
\begin{equation} \label{hatPsi:kin}
\bar{\psi}_z = 0 \ \ \text{on} \ \ z=0, \ \ \bar{\psi} = 0 \ \ \text{on} \ \ z=\pitwo.
\end{equation}
In the next section we will linearize equations \eqref{sys:barpsi:barpsiq}-\eqref{hatPsi:kin} near an appropriate solution of the model problem.


\subsection{Linearization near the Stokes corner flow}

In order to determine the behaviour of $\bar{\psi}$ near the positive infinity we need to examine the corresponding problem, which is obtained from \eqref{sys:barpsi:barpsiq}-\eqref{hatPsi:kin} by setting $\zeta = -\tfrac{\pi}{6}$; one can also think about passing to the limit $q \to +\infty$ for the coefficients in \eqref{sys:barpsi:barpsiq}-\eqref{hatPsi:kin}. Thus, taking the limit in \eqref{hatPsi} one obtains the relation $\hat{\Psi} = \tfrac23 \bar{\psi}_q$. Therefore, the limiting problem for $\bar{\psi}=\bar{U}$ and $\hat{\Psi} = \tfrac{2}{3} \bar{U}_q$ is the following
\begin{align*}
	& \tfrac{2}{3} \bar{U}_{qq} = - \tfrac32 \bar{U}_{zz}, \ \ && \text{for} \ \ (q,z) \in \R \times (0,\pitwo), \\
	& \tfrac49 \bar{U}_q^2(q,\pitwo) + \bar{U}_z^2(q,\pitwo) = \tfrac49 e^{-3q}, \ \ &&  \text{for} \ \ q \in \R, \\
	& \bar{U}_z(q,0) = \bar{U}(q,\pitwo) = 0, \ \ &&  \text{for} \ \ q \in \R.
	\end{align*}
Separating variables and solving the equations, we find two decaying solutions
\[
\bar{U}_+(q,z) = \tfrac23 e^{-\tfrac32 q} \cos{z}, \ \ \bar{U}_-(q,z) = -\tfrac23 e^{-\tfrac32 q} \cos{z}.
\]
These are the candidates for the leading term in the asymptotics for $\bar{\psi}$. Note that $\bar{\psi}=\bar{U}_+$ and $\zeta = -\tfrac{\pi}6$ determine a solution known as the Stokes corner flow.

\begin{lemma} \label{psiz:decay} Let $\bar{U}_+$ be defined as above. Then 
	\[
	\bar{\psi}_z =  (\bar{U}_+)_z  + o(1)e^{-\thtw q} \ \ \text{on} \ \ z=\pitwo,
	\]
	where $o(1) \to 0$ as $q \to +\infty$. Furthermore, we have $\hat{\Psi}(q,\pitwo) = o(1)e^{-\tfrac32 q}$, where $o(1) \to 0$ as $q \to +\infty$.
\end{lemma}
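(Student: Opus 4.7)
My plan is to exploit the Dirichlet condition $\bar\psi(q,\pitwo)=0$ to eliminate one unknown on the top boundary, reducing the nonlinear Bernoulli relation \eqref{hatPsi:bern} to a single scalar equation for $\bar\psi_z(q,\pitwo)$, and then to pass to the limit $q\to+\infty$ using \eqref{zetalim} and \eqref{xit}.

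First I would differentiate $\bar\psi(q,\pitwo)=0$ in $q$ to obtain $\bar\psi_q(q,\pitwo)=0$. Substituting this into \eqref{hatPsi} at $z=\pitwo$ yields the algebraic identity
\[
\hat\Psi(q,\pitwo)=-\zeta_q(q)\,\bar\psi_z(q,\pitwo),
\]
which immediately turns \eqref{hatPsi:bern} into
\[
\bar\psi_z(q,\pitwo)^2\bigl(1+\zeta_q(q)^2\bigr)=-2\left(\frac{\zeta(q)+\pitwo}{\pitwo}\right)^{2} e^{-3q}\sin\zeta(q).
\]
Next I would invoke \eqref{zetalim} to replace $\zeta$ by $-\tfrac{\pi}{6}$ (so $\sin\zeta\to-\tfrac12$ and $(\zeta+\pitwo)/\pitwo\to\tfrac23$) and \eqref{xit} to discard the factor $1+\zeta_q^2$, obtaining
\[
\bar\psi_z(q,\pitwo)^2=\tfrac{4}{9}e^{-3q}+o(1)\,e^{-3q}\quad\text{as }q\to+\infty.
\]
Taking the square root gives $|\bar\psi_z(q,\pitwo)|=\tfrac23 e^{-\thtw q}+o(1)e^{-\thtw q}$, which agrees in modulus with $|(\bar U_+)_z(q,\pitwo)|=\tfrac23 e^{-\thtw q}$.

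The remaining issue, which I view as the only subtle point, is fixing the sign of $\bar\psi_z(q,\pitwo)$ so as to match $(\bar U_+)_z(q,\pitwo)=-\tfrac23 e^{-\thtw q}$ rather than its opposite. I would argue as follows. From $\psi=1$ on $y=\eta$ one has $\psi_x=-\eta_x\psi_y$ on the surface, and by the hypothesis $\psi_y>0$ together with \eqref{eta120} we get, for $x>0$ near the crest, $\psi_x>0$ and $\psi_y>0$. Combining with \eqref{hatgradpsi} at $\theta=\zeta$, where $\zeta\to-\tfrac{\pi}{6}$ gives $\sin\zeta<0$ and $\cos\zeta>0$, shows that $\hat\psi_\theta(t,\zeta(t))<0$ for all sufficiently large $t$. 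Since $\bar\psi_z(q,\pitwo)=\hat\psi_\theta(q,\zeta(q))\cdot \pitwo/(\zeta+\pitwo)$ and the last factor is positive, we conclude $\bar\psi_z(q,\pitwo)<0$, which selects the minus sign and yields
\[
\bar\psi_z(q,\pitwo)=-\tfrac23 e^{-\thtw q}+o(1)e^{-\thtw q}=(\bar U_+)_z(q,\pitwo)+o(1)e^{-\thtw q}.
\]
Finally, plugging this back into $\hat\Psi(q,\pitwo)=-\zeta_q\bar\psi_z(q,\pitwo)$ and using \eqref{xit} gives $\hat\Psi(q,\pitwo)=o(1)e^{-\thtw q}$, completing the proof.

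The main (minor) obstacle is the sign determination just described; once the unidirectionality assumption $\psi_y>0$ is combined correctly with the $120^\circ$ asymptotics of $\eta_x$, the rest reduces to purely algebraic manipulation of \eqref{hatPsi:bern} combined with the limits \eqref{zetalim}--\eqref{xit}.
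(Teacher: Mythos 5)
Your proposal is correct and follows essentially the same route as the paper: both start from the identity $\hat\Psi=-\zeta_q\bar\psi_z$ on $z=\pitwo$ (coming from $\bar\psi_q=0$ there), substitute into \eqref{hatPsi:bern} to get $(1+\zeta_q^2)\bar\psi_z^2=-2((\zeta+\pitwo)/\pitwo)^2 e^{-3q}\sin\zeta$, pass to the limits $\zeta\to-\tfrac{\pi}{6}$, $\zeta_q\to0$, and determine the sign of $\bar\psi_z(q,\pitwo)$ via $\psi_x=-\eta_x\psi_y$ and \eqref{hatgradpsi}. The only slip is cosmetic: the Jacobian relating $\bar\psi_z$ to $\hat\psi_\theta$ is $(\zeta+\pitwo)/\pitwo$, not its reciprocal, but as both are positive this does not affect the sign conclusion.
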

\begin{proof}
	The latter is a direct consequence of \eqref{hatPsi:bern}. Note that by the definition \eqref{hatPsi} of $\hat{\Psi}$ we have
	\begin{equation}\label{Psiboundary}
	\hat{\Psi} = - \zeta_q \bar{\psi}_z \ \ \text{on} \ \ z = \pitwo.
	\end{equation}
	Using this relation in the Bernoulli equation \eqref{hatPsi:bern} we obtain
	\[
	\begin{split}
		(1+\zeta_q^2)  \bar{\psi}_z^2  & =  -2 \left(\frac{\tfrac{\pi}{3}+(\zeta+\tfrac{\pi}{6})}{\pitwo}\right)^2 e^{-3q}\sin(\zeta+\tfrac{\pi}{6}-\tfrac{\pi}{6})	\\
		&= - \tfrac89 (1+\tfrac{2}{\pi} (\zeta + \tfrac{\pi}{6}))^2 e^{-3q} \left( \tfrac{\sqrt{3}}{2}\sin(\zeta + \tfrac{\pi}{6}) - \tfrac12 + \tfrac12(1-\cos(\zeta + \tfrac{\pi}{6}))  \right).
	\end{split}	
	\]
	Now using the fact that $\zeta + \tfrac{\pi}{6},\zeta_q \to 0$ as $q \to +\infty$, we conclude that
	\begin{equation}\label{barpsiz}
		\bar{\psi}_z^2 = \tfrac49 e^{-3q}(1+ o(1)) \ \ \text{for} \ \ z = \pitwo.
	\end{equation}
	Note that $\bar{\psi}_z(q,\pitwo)$ is negative for large $q$, which follows from the first formula in \eqref{hatgradpsi} (since $\psi_x = -\eta_x \psi_y$ at the boundary). Taking that into account we obtain the desired asymptotics for $\bar{\phi}_z$ by taking the square root in \eqref{barpsiz}.
\end{proof}
Lemma \ref{psiz:decay} shows that $\bar{\psi}$ is close to $\bar{U}$ at the boundary of the domain. As we will justify in next sections this information is enough to obtain higher order asymptotics for $\bar{\psi}$. \\

Now we can linearize equations for $\bar{\psi}$ and $\hat{\Psi}$ near $\bar{U} = \bar{U}_+$ by setting
\[
\bar{\psi}= \bar{U}+\bar{\Phi}, \ \ \hat{\Psi} = \tfrac23 \bar{U}_q  + \bar{\Psi}, \ \ \zeta = - \tfrac16\pi + \xi.
\]
This leads to the following problem for $\bar{\Phi}, \bar{\Psi}$ and $\xi$, where we extract linear terms:
\begin{align}
& \bar{\Phi}_q = \tfrac32 \bar{\Psi} + \frac{9}{2 \pi} \bar{U} \xi + \frac{3}{\pi} z \bar{U}_z \xi_q+\bar{N}_1,  \label{sys:barPhi}\\
& \bar{\Psi}_q =  -\tfrac32 \bar{\Phi}_{zz}-\frac{9}{2\pi} \bar{U} \xi - \frac{3}{\pi}(z \bar{U})_z \xi_q + \bar{N}_2 \label{sys:barPsi}.
\end{align}
Here
\[
	\begin{split}
		\bar{N}_1 & =  - \tfrac32 \frac{\xi}{\xi+\tfrac{\pi}{3}} \hat{\Psi}+ \tfrac3{\pi} \frac{\xi^2}{\xi+\tfrac{\pi}{3}} \bar{U}_q + \frac{z \xi_q}{\xi+\tfrac{\pi}{3}} \bar{\Phi}_z-\tfrac{3}{\pi} \frac{z \bar{U}_z\xi_q}{\xi+\tfrac{\pi}{3}} \xi, \\
		\bar{N}_2 & = \frac{\xi_q}{\xi+\tfrac{\pi}{3}} (z \bar{\Psi})_z + \tfrac{3}{\pi} \frac{(z \bar{U})_z \xi_q}{\xi+\tfrac{\pi}{3}} \xi + \tfrac32 \frac{\xi}{\xi+\tfrac{\pi}{3}} \bar{\Phi}_{zz} + \tfrac9{2\pi} \frac{\bar{U} \xi}{\xi+\tfrac{\pi}{3}}\xi + \tfrac2{\pi}e^{-2q} (\xi+\tfrac{\pi}{3}) \hat{\omega}(\bar{\psi}).
	\end{split}
\]
The Bernoulli equation \eqref{hatPsi:bern} becomes
\[
2 \bar{U}_z \bar{\Phi}_z - \tfrac{8}{3\pi} \xi e^{-3q} + \tfrac{4}{3\sqrt{3}} \xi e^{-3q} = \bar{N}_3
\]
with
\[
\begin{split}
\bar{N}_3=& -\bar{\Psi}^2-\bar{\Phi}_z^2-2\left( \frac{\zeta+\pitwo}{\pitwo} \right)^2 e^{-3q} \left\{ \tfrac{\sqrt{3}}{2}(\sin\xi-\xi) +\tfrac12(1-\cos\xi)\right\} \\
&-\tfrac{8}{\pi^2} \left\{ \tfrac{\sqrt{3}}{2}\xi-\tfrac12+\tfrac{\pi}{\sqrt{3}} \right\} \xi^2 e^{-3q}, 
\end{split}
\]
while 
\[
	\bar{\Phi} = 0 \ \ \text{on} \ \ z=\pitwo, \ \ \bar{\Phi}_z = 0 \ \ \text{on} \ \ z=0.
\]
The linear part of this system can be significantly simplified by introducing new functions $\Phi$ and $\Psi$ from the relations
\begin{equation}\label{defPsiPhi}
\bar{\Phi} = \Phi + \frac{3}{\pi} z \bar{U}_z \xi, \ \ \bar{\Psi} = \Psi -\frac{3}{\pi} (z \bar{U})_z \xi.
\end{equation}
The latter transformation allows to eliminate linear terms with $\xi$ and $\xi_q$ that are not included in $\bar{N}_1,\bar{N}_2$ and $\bar{N}_3$. Thus, plugging \eqref{defPsiPhi} into \eqref{sys:barPhi}-\eqref{sys:barPsi}, we obtain
\begin{align}
& \Phi_q = \tfrac32 \Psi + N_1,  \label{sys:Phi}\\
& \Psi_q =  -\tfrac32 \Phi_{zz} + N_2 \label{sys:Psi},
\end{align}
where
\[
\begin{split}
N_1 & = -\frac{9}{2(\pi+3\xi)} \Psi \xi  + \frac{3z}{\pi(\pi  + 3 \xi)} (\tfrac92 \bar{U}_z \xi^2 - 3 z \bar{U} \xi \xi_q) + \frac{3z}{(\pi + 3 \xi)} \Phi_z \xi_q, \\
N_2 & = \frac{2 \hat{\omega}(\bar{\psi}) (\pi + 3 \xi)}{3 \pi}   e^{-2 q} -\frac{27}{2\pi (\pi + 3\xi)} (\barU+z \bar{U}_z) \xi^2 + \frac{3 \xi_q (z\Psi)_z}{(\pi + 3\xi)} - \frac{9 z (z \barU )_{zz}}{\pi (\pi + 3\xi)} \xi \xi_q + \frac{9 \Phi_{zz} \xi}{2(\pi + 3\xi)}.
\end{split}
\]
The Bernoulli equation takes the following form
\begin{equation} \label{berntmp}
\begin{split}
2 \bar{U}_z \Phi_z + \frac{4 \sqrt{3}}{9} e^{-3q} \xi & = \frac{4 \sqrt{3}}{9\pi^2} e^{-3q} (3\xi+\pi)^2(\sin\xi - \xi) \\
& + \frac{4}{9\pi^2} e^{-3q}(9 \xi^2 \cos \xi + (\cos\xi - 1)(\pi^2+6 \pi \xi)) \\
& - (\Psi - \tfrac32 \bar{U}_z \xi)^2 - (\tfrac{3}{\pi} \bar{U}_z \xi + \Phi_z)^2,
\end{split}
\end{equation}
while the boundary condition at the bottom is unchanged:
\[
\bar{\Phi}_z = 0 \ \ \text{on} \ \ z = 0.
\]
The transformation \eqref{defPsiPhi} has another advantage. Using the homogenous boundary relation $\bar{\Phi} = 0$ at the surface one can recover $\xi$ in terms of the new function $\Phi$ as 
\begin{equation} \label{xiphi}
\xi = - \twth (\barU_z)^{-1} \Phi = e^{\tfrac32 q} \Phi\ \ \text{on} \ \ z = \pitwo.
\end{equation}
This allows to express
\begin{equation}\label{N12linear}
\begin{split}
N_1  = & -\frac{9\xi}{2(\pi+3\xi)} \Psi   - \frac{9 z \sin z \xi}{\pi(\pi  + 3 \xi)} \Phi|_{z=\pitwo} - \frac{6 z^2 \cos z \xi_q}{\pi(\pi  + 3 \xi)} \Phi|_{z=\pitwo} + \frac{3z \xi_q}{(\pi + 3 \xi)} \Phi_z , \\
N_2  = & -\frac{9 \xi (\cos z - z \sin z)}{\pi (\pi + 3\xi)} \Phi|_{z=\pitwo} + \frac{3 \xi_q}{(\pi + 3\xi)} (z\Psi)_z + \frac{6z (\cos z + z \sin z) \xi_q}{\pi (\pi + 3\xi)} \Phi|_{z=\pitwo} \\
& + \frac{9\xi}{2(\pi + 3\xi)}\Phi_{zz} + \frac{2\hat{\omega}(\bar{\psi})e^{-\tfrac12 q}}{\pi}\Phi|_{z=\pitwo} +  \tfrac{2}{3} \omega(1) e^{-2q} + \frac{2 (\hat{\omega}(\bar{\psi})-\hat{\omega}(0)) (\pi + 3 \xi)}{3 \pi}   e^{-2 q}.
\end{split}
\end{equation}
Thus, except the last two terms in $N_2$, we can think $N_1$ and $N_2$ as linear operators of $\Phi$ and $\Psi$. Below we will formalize this idea.

Similarly, we write \eqref{berntmp} as
\begin{equation} \label{bernfinal}
\Phi_z - \tfrac{1}{\sqrt{3}} \Phi = N_3,
\end{equation}
where
\begin{equation}\label{N3lin}
\begin{split}
N_3 = &  \left( \left( \tfrac34 + \tfrac{3}{\pi} \right) e^{-\tfrac32 q} - \tfrac{1}{\sqrt{3} \pi^2} \frac{(3\xi+\pi)^2(\sin\xi - \xi)}{\xi}  -\frac{9 \xi^2 \cos \xi + (\cos\xi - 1)(\pi^2+6 \pi \xi)}{3\pi^2 \xi} \right)\Phi \\
& +  \left( \tfrac34 \Psi e^{\tfrac32 q} + \tfrac32 \xi  \right) \Psi + \left( \tfrac34 \Phi_z e^{\tfrac32 q}-\tfrac3{\pi} \xi \right) \Phi_z.
\end{split}
\end{equation}
As before it can be seen as a linear operator of $\Phi$ and $\Psi$ with decaying coefficients. \\

In what follows we will examine the decay rate of $\Phi$, $\Psi$ and $\xi$. Note that Lemma \ref{lemmahatpsi} only shows that $\Phi$, $\Psi$ are $O(e^{-(3/2) q})$, which is insufficient since this is the order of the leading term $\bar{U}$. Furthermore, we have no information about the decay rate of $\xi$ and $\xi_q$. Such scant information makes the proof quite technical, though the idea is very clear. Let us outline the main steps of the proof. \\

\textbf{Weak decay of $\Phi$ and $\Psi$}. This is the main idea of the proof. Roughly speaking the decay of $\Phi$ and $\Psi$ is determined by the forcing term with $\omega(1)$ in the definition of $N_2$ in \eqref{N12linear}. To formalize that we will define appropriate weighted spaces (with an exponential function as the weight) and will study the model linear operator
	\[
	{\mathcal L}(\Phi,\Psi) = \left(\Phi_q - \tfrac32 \Psi, \Psi_q + \tfrac32 \Phi_{zz}\right).
	\]
	Thus, using the invertibility of ${\mathcal L}$ we will obtain that 
	\[
		\int_{q<q'<q+1}\int_0^{\pitwo} \Phi^2(q',z)+\Phi^2_z(q',z)+\Psi^2(q',z) dz dq' = O(e^{-2\beta q}) 
	\]
	as $ q \to +\infty$ for any $\beta \in (\tfrac32,2)$. In particular this will give an exponential decay for $\xi$. An exponential decay for $\xi_q$ and $\xi_{qq}$ requires additional arguments. \\
	
\textbf{An exponential decay of H\"older norms $\|\Phi\|_{C^{1,\gamma}([q,q+1] \times [0,\pi/2])}$}. For this purpose we will apply Shauder estimates for $\Phi$, solving the second-order elliptic boundary problem. The difficulty here is that we can not exclude $\xi_q$ in terms of $\Phi_q$ by using \eqref{xieta} since this would give $e^{(3/2) q}$ as a coefficient. Instead of that we will show that $\|\xi_q\|_{C^\gamma([q,q+1])} \to 0$ as $q\to+\infty$, which is enough to apply Shauder estimates (with the right-hand side in a divergence form). Next we will obtain a similar statement for the norms $\|\Phi\|_{C^{2,\gamma}([q,q+1] \times [0,\pi/2])}$. This will give the decay for $\xi_q$ and $\xi_{qq}$. \\

\textbf{Higher order asymptotics.} Once we have the decay for $\Phi$ and $\xi$ and their derivatives we can obtain higher order asymptotics. The leading order term for $\Phi$ is determined by the expression with $\omega(1)$ in \eqref{N12linear} and can be found explicitly, which gives
	\[
	\Phi(q,z) = \tfrac{1}{12}(3-2\cos{(\tfrac43 z)}) \hat{\omega}(0) e^{-2q}  + \Phi_{err}(q,z).
	\]
Thus, we need to establish the decay properties for $\Phi_{err}$ and this can be done the same way as before. Though an exponential decay for the function $\Phi_{err}$ by itself can be obtained by using the invertibility of ${\mathcal L}$, the decay for higher order derivatives will require additional arguments.

\subsection{A weak decay for $\Phi$ and $\Phi$}

It follows from \eqref{psiz:decay} that $\Phi(q,\pitwo) e^{ (3/2)q}$ decays to zero as $q \to +\infty$. Below we will prove the following statement:

\begin{proposition}\label{holderdecay} For any $\gamma \in (0,\tfrac12)$ we have that
	\[
		\|\Phi\|_{C^{1,\gamma}([q,q+1]\times[0,\pitwo])}, \ \|\Psi\|_{C^{\gamma}([q,q+1]\times[0,\pitwo])}  = e^{ -\tfrac32 q} o(1),
	\]
	where $o(1) \to 0$ as $q \to +\infty$.
\end{proposition}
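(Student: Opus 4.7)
Following the outline sketched in the excerpt, my plan is to prove Proposition \ref{holderdecay} in two stages: first establish a weighted $L^2$ decay estimate for $(\Phi,\Psi)$ by inverting the model operator $\mathcal L$, then bootstrap to Hölder norms via Schauder estimates on the scalar elliptic equation for $\Phi$. The separation-of-variables ansatz $\Phi = e^{-\lambda q}\phi(z)$ reduces the homogeneous problem $\mathcal L(\Phi,\Psi)=0$, together with the linearised boundary conditions $\phi'(0)=0$ and $\phi'(\pitwo) = \tfrac{1}{\sqrt 3}\phi(\pitwo)$ coming from \eqref{bernfinal} with $N_3=0$, to $\phi'' = -(\tfrac{2\lambda}{3})^2\phi$. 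Nontrivial solutions arise only at $\lambda = \tfrac32\tau_k$, where $\tau_k$ are the positive roots of $\tau = -\tfrac{1}{\sqrt 3}\cot(\tfrac{\pi}{2}\tau)$. The smallest such root is $\tau_1\approx 1.8$ from Theorem \ref{thm1}, so by standard Kondratiev-type theory $\mathcal L$ is invertible on $e^{-\beta q}L^2(\R_+\times(0,\pitwo))$ for every $\beta\in(\tfrac32,\tfrac32\tau_1)$, in particular throughout $(\tfrac32,2)$.

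I would then rewrite \eqref{sys:Phi}--\eqref{sys:Psi} and \eqref{bernfinal} as $\mathcal L(\Phi,\Psi)=(N_1,N_2)$ with boundary forcing $N_3$ at $z=\pitwo$, and inspect \eqref{N12linear}--\eqref{N3lin} term by term. Every nonlinear contribution is either (i) a coefficient of the form $\xi/(\xi+\tfrac\pi3)$ or $\xi_q/(\xi+\tfrac\pi3)$ (which is $o(1)$ by \eqref{zetalim} and \eqref{xit}) times one of $\Phi,\Psi,\Phi_z,\Phi_{zz}$ or a boundary trace of $\Phi$, or (ii) pure forcing of order $O(e^{-2q})$, the dominant piece being $\tfrac23\omega(1)e^{-2q}$ together with Taylor remainders in the Bernoulli expansion. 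Choosing $q$ large enough to absorb the category (i) coefficients into the left-hand side and inverting $\mathcal L$ against the category (ii) forcing yields
\[
\int_q^{q+1}\!\!\int_0^{\pitwo}(\Phi^2+\Phi_z^2+\Psi^2)\,dz\,dq' = O(e^{-2\beta q})
\]
for any $\beta\in(\tfrac32,2)$, which is the desired weighted $L^2$ decay.

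To upgrade to Hölder bounds, eliminate $\Psi$ from \eqref{sys:Phi}--\eqref{sys:Psi} to obtain the uniformly elliptic scalar equation $\Phi_{qq} + \tfrac94\Phi_{zz} = \tfrac32 N_2 - (N_1)_q$ on the strip, with the Robin condition \eqref{bernfinal} at $z=\pitwo$ and $\Phi_z=0$ at $z=0$. Rescaling $\tilde\Phi(q',z) := e^{\beta q}\Phi(q+q',z)$ on unit slabs $[0,1]\times[0,\pitwo]$ produces a uniformly elliptic problem with right-hand side bounded in $L^p$ for any finite $p$; here Lemma \ref{lemmahatpsi} supplies the necessary control on the non-$\Phi$ pieces of $N_1, N_2$. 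Standard interior plus boundary Schauder theory then gives $C^{1,\gamma}$ control of $\tilde\Phi$ and $C^\gamma$ control of the analogously rescaled $\tilde\Psi$, which translates back to $\|\Phi\|_{C^{1,\gamma}} + \|\Psi\|_{C^\gamma} = O(e^{-\beta q}) = e^{-\tfrac32 q}o(1)$ since $\beta$ can be taken anywhere in $(\tfrac32,2)$.

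The main obstacle is the derivative term $(N_1)_q$ and the boundary datum $N_3$, both of which involve $\xi_q$, together with the coupling \eqref{xiphi} $\xi = e^{\tfrac32 q}\Phi|_{z=\pitwo}$, which inserts a dangerous $e^{\tfrac32 q}$ factor whenever one substitutes back. A priori only $\xi_q = o(1)$ is known with no quantitative rate. The resolution is that after rescaling to unit slabs the critical $e^{\tfrac32 q}$ is compensated by the $e^{\beta q}$ normalisation with $\beta>\tfrac32$, so what would have been a blow-up becomes a genuine small coefficient and the absorption closes. Getting the better, quantitative Hölder decay of $\xi_q$ itself is precisely what distinguishes Proposition \ref{holderdecay} from the sharper polynomial-rate statements derived in later stages of the paper, and is the reason the present step yields only the $o(1)$ refinement of the critical exponent $\tfrac32$.
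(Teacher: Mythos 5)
Your proposal does not match the paper's proof of Proposition~\ref{holderdecay}: the paper establishes this preliminary estimate by a compactness (blow-up) argument, not by inverting $\mathcal L$ in weighted spaces. Specifically, assuming the estimate fails along a sequence $q_j\to+\infty$, the paper rescales $\Phi^{(j)}(q,z)=\Phi(q+q_j,z)e^{\tfrac32(q+q_j)}$ (similarly for $\Psi$ and $\xi$), uses the uniform $C^{1,1/2}$ bounds from Lemma~\ref{lemmahatpsi} and Lemma~\ref{lemmaxi} to extract a limit $(\Phi^\infty,\Psi^\infty)$ converging in $C^{1,\gamma}$ for $\gamma<1/2$, passes to the limit in a weak form of \eqref{sys:Phi}--\eqref{sys:Psi}, and then observes that the limit is a bounded solution of $\Phi^\infty_{qq}-3\Phi^\infty_q+\tfrac94\Phi^\infty+\tfrac94\Phi^\infty_{zz}=0$ with \emph{overdetermined} boundary data --- $\Phi^\infty_z=0$ at $z=0$ and both $\Phi^\infty=0$ and $\Phi^\infty_z=0$ at $z=\pitwo$, the latter two supplied by \eqref{zetalim} and Lemma~\ref{psiz:decay} --- forcing $\Phi^\infty\equiv0$ and yielding a contradiction. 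The Kondratiev machinery you invoke is what the paper uses \emph{afterwards} (Propositions~\ref{p:L}--\ref{C2gamma}) to prove stronger polynomial rates, and it explicitly takes Proposition~\ref{holderdecay} as input: the near-identity property of the boundary-flattening map \eqref{bernhomtmp}, and hence the smallness of $N_1^\star,N_2^\star$ in \eqref{decayN12}, rests on knowing $\Phi_z e^{\tfrac32 q}$ and $\Psi e^{\tfrac32 q}$ are $o(1)$ throughout the strip, not merely $O(1)$.

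This is where your argument has a genuine gap. You classify every nonlinear contribution as either an $o(1)$ coefficient of the form $\xi/(\xi+\tfrac\pi3)$ or $\xi_q/(\xi+\tfrac\pi3)$, or pure $O(e^{-2q})$ forcing, but $N_3$ in \eqref{N3lin} contains $(\tfrac34\Psi e^{\tfrac32 q}+\tfrac32\xi)\Psi$ and $(\tfrac34\Phi_z e^{\tfrac32 q}-\tfrac3\pi\xi)\Phi_z$; those coefficients are a priori only $O(1)$ from Lemma~\ref{lemmahatpsi}. The ``resolution'' you offer --- that the $e^{\beta q}$ normalisation with $\beta>\tfrac32$ compensates the $e^{\tfrac32 q}$ --- is circular: it presupposes the $e^{-\beta q}$ decay of $\Psi$ you are trying to prove in order to make the coefficient small. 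You also misattribute the dangerous $e^{\tfrac32 q}$ to the coupling \eqref{xiphi}; in fact $\xi$ and $\xi_q$ are treated in the paper as \emph{known} coefficients tending to zero by \eqref{zetalim} and \eqref{xit}, so no substitution of \eqref{xiphi} is needed. The genuine input that makes the boundary coefficient small is the boundary statement $\hat\Psi(q,\pitwo)=o(1)e^{-\tfrac32 q}$ from Lemma~\ref{psiz:decay}, which you do not use. Finally, your claimed invertibility range $(\tfrac32,\tfrac32\tau_1)$ is unnecessarily restrictive: $\tau=1$ is not a root of $\tau\tan(\tfrac\pi2\tau)=-\tfrac1{\sqrt3}$, so $\beta=\tfrac32$ is also noncritical and the whole interval $(0,\tfrac32\tau_1)$ is admissible --- a detail the subsequent upgrade from $\beta_0<\tfrac32$ to $\beta\in(\tfrac32,2)$ depends on.
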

This improves the statement of Lemma \ref{psiz:decay}. Before proving the claim we need to obtain a similar decay property for $\xi$:

\begin{lemma} \label{lemmaxi} For any $\gamma \in (0,\tfrac12)$ the norms $\|\xi\|_{C^{1,1/2}([q,q+1])}$ are uniformly bounded and
	\begin{equation}\label{xiholder}
		\|\xi\|_{C^{1,\gamma}([q,q+1])} \to 0
	\end{equation}
	as $q \to +\infty$.
\end{lemma}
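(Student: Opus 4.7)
The plan is to express $\xi_q = \zeta_q$ as the ratio
\[
\zeta_q = -\frac{\hat{\psi}_t(q,\zeta(q))}{\hat{\psi}_\theta(q,\zeta(q))}
\]
of boundary traces of $\nabla\hat{\psi}$, exactly as in the first equality of \eqref{xieta}, and then to combine the regularity bound of Lemma \ref{lemmahatpsi} with the non-degeneracy of $\hat{\psi}_\theta$ on the surface supplied by Lemma \ref{psiz:decay}. I set $F(q) := \hat{\psi}_t(q,\zeta(q))$ and $G(q) := \hat{\psi}_\theta(q,\zeta(q))$, and will work on a generic window $[q,q+1]$.

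The first step is to verify that $F$ and $G$ are Lipschitz on $[q,q+1]$ with Lipschitz constant $\le C e^{-\frac{3}{2}q}$. By Lemma \ref{lemmahatpsi}, the second derivatives $\hat{\psi}_{tt},\hat{\psi}_{t\theta},\hat{\psi}_{\theta\theta}$ are each bounded by $C e^{-\frac{3}{2}t}$ on $\hat{S}_t$; meanwhile $\zeta$ is a priori Lipschitz on $[q,q+1]$ with Lipschitz constant $o(1)$ as $q\to+\infty$, which follows from $\eta_x \in C^1([0,\delta])$ and \eqref{xit}. The standard split
\[
F(q_1)-F(q_2) = [\hat{\psi}_t(q_1,\zeta(q_1)) - \hat{\psi}_t(q_2,\zeta(q_1))] + [\hat{\psi}_t(q_2,\zeta(q_1)) - \hat{\psi}_t(q_2,\zeta(q_2))]
\]
combined with the mean value theorem on each bracket gives the claim for $F$, and the identical argument works for $G$.

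Second, Lemma \ref{psiz:decay} together with \eqref{Psiboundary} provides the matching lower bound $|G(q)| \ge c\, e^{-\frac{3}{2}q}$ for $q$ large. The elementary quotient estimate
\[
\left|\frac{F(q_1)}{G(q_1)} - \frac{F(q_2)}{G(q_2)}\right| \le \frac{|F(q_1)-F(q_2)|\,|G(q_2)| + |F(q_2)|\,|G(q_1)-G(q_2)|}{|G(q_1)|\,|G(q_2)|}
\]
then causes the $e^{-3q}$ in both numerator and denominator to cancel exactly, and I conclude that $\zeta_q = -F/G$ is Lipschitz on $[q,q+1]$ with a Lipschitz constant bounded independently of $q$. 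Combined with $\xi,\xi_q \to 0$ from \eqref{zetalim} and \eqref{xit}, this yields the uniform $C^{1,1/2}$ bound asserted in the lemma (in fact a uniform $C^{1,1}$ bound on $\xi_q$).

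For the decay statement \eqref{xiholder}, I rely on $\|\xi\|_{C^1([q,q+1])} \to 0$ (from \eqref{zetalim} and \eqref{xit}) together with the uniform bound on $[\xi_q]_{C^{1/2}([q,q+1])}$ just established. Given $\gamma \in (0,\tfrac12)$ and $\epsilon>0$, the dichotomy: use the $C^{1/2}$ seminorm with a factor $\epsilon^{1/2-\gamma}$ when $|q_1-q_2|<\epsilon$, and use $2\|\xi_q\|_\infty \epsilon^{-\gamma}$ otherwise, shows that $[\xi_q]_{C^\gamma([q,q+1])} \to 0$, hence $\|\xi\|_{C^{1,\gamma}([q,q+1])}\to 0$. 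The main technical input, and the point requiring most care, is the precise matching of scales between Lemma \ref{lemmahatpsi} (supplying the $e^{-\frac{3}{2}t}$ upper bound on $|\nabla\hat{\psi}|$ and its derivatives on the surface) and Lemma \ref{psiz:decay} (supplying the matching lower bound on $\hat{\psi}_\theta$); without this balance the $q$-dependence in the Lipschitz constant of $\zeta_q$ would not cancel.
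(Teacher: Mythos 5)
Your argument is essentially the paper's, recast in the $(t,\theta)$ variables. The paper expresses $\zeta_t$ via \eqref{xieta} in the physical $(x,y)$ variables, obtains the matching lower bound on the denominator directly from the Bernoulli relation and the Stokes-corner limit $(-\psi_x\sin\zeta + \psi_y\cos\zeta \sim c\,e^{-t/2})$, and uses the global $C^{1/2}$ regularity of $\nabla\psi$ from Theorem \ref{thmreg} on the numerator; you keep the identical quotient-and-cancellation structure, but read the numerator bounds off Lemma \ref{lemmahatpsi} and the denominator bound off Lemma \ref{psiz:decay}. Because Lemma \ref{lemmahatpsi} controls second derivatives, you in fact obtain $\xi_q$ uniformly Lipschitz, which is slightly stronger than the paper's stated $C^{1/2}$ seminorm bound (both suffice for the interpolation step, which you and the paper both carry out in the same way). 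Two small points worth fixing. First, the citation of \eqref{Psiboundary} for the lower bound on $|G|=|\hat\psi_\theta|$ is misplaced: what is actually used is the change-of-variable relation $\hat\psi_\theta = \tfrac{\pi/2}{\zeta+\pi/2}\bar\psi_z$ together with the magnitude of $\bar\psi_z$ from Lemma \ref{psiz:decay} (equivalently \eqref{barpsiz}). Second, in the mean-value split of $F(q_1)-F(q_2)$ you should pick the intermediate node $(q_2,\zeta(q_1))$ or $(q_1,\zeta(q_2))$ according to the sign of $\zeta(q_1)-\zeta(q_2)$ so that it lies in $\overline{\hat S}$, or simply note that Lemma \ref{lemmahatpsi} bounds $\hat\psi_t,\hat\psi_\theta$ as Lipschitz on the closed Lipschitz domain $\hat S_t$, so the estimate $|F(q_1)-F(q_2)|\le C e^{-3q/2}(1+\mathrm{Lip}(\zeta))|q_1-q_2|$ follows directly from the intrinsic (path) distance.
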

\begin{proof}
	To prove the claim it is enough to show that the norm $\|\xi_q\|_{C^{1/2}([q,q+1])}$ is bounded by a constant independent of $q$. Indeed, assuming that is true we can use the inequality
	\[
	\begin{split}
	\frac{|\xi_q(q_1)-\xi_q(q_2)|}{|q_1-q_2|^\gamma} & = \frac{|\xi_q(q_1)-\xi_q(q_2)|}{|q_1-q_2|^{\tfrac12}} |q_1-q_2|^{\tfrac12-\gamma} \\
	& \leq \begin{cases}
	& \|\xi_q\|_{C^{\tfrac12}([q,q+1])}\|\xi_q\|_{L^{\infty}[q,q+1]}^{\tfrac12-\gamma}, \ \ |q_1-q_2| \leq |\xi_q(q_1)-\xi_q(q_2)| \\
	& 2 \|\xi_q\|_{L^{\infty}[q,q+1]}^{1-\gamma}, \ \  |q_1-q_2| > |\xi_q(q_1)-\xi_q(q_2)|
	\end{cases}
	\end{split}
	\]
	Thus, if $\|\xi_q\|_{C^{1/2}([q,q+1])}$ are uniformly bounded, then the right-hand side from the above estimate tends to zero since we already known that $\xi_q \to 0$ as $q \to +\infty$. Note that $\xi_q(q) = \zeta_t(q)$ so we need to estimate the quantity $|\zeta_t(t_1) - \zeta_t(t_2)|$ for arbitrary $t_1,t_2 \in [t,t+1]$. For that purpose we use the formula \eqref{xieta} to obtain
	\begin{equation}\label{zetaest1}
	\begin{split}
	|\zeta_t(t_1) - \zeta_t(t_2)| & \leq C ( |\nabla \psi (x_1,\eta(x_1))-\nabla \psi (x_2,\eta(x_2))| \\
	& + C \max_{x_1 \leq x \leq x_2} |\nabla \psi(x,\eta(x))| |\zeta(t_1)-\zeta(t_2)| ) e^{\tfrac12 t}.
	\end{split}
	\end{equation}
	Here we used that the quantity
	\[
	-\psi_x \sin{\zeta} + \psi_y \cos{\zeta} = \left(\tfrac{1}{2\sqrt{3}}+\tfrac{\sqrt{3}}{2} + o(1)\right) e^{-\tfrac12 t},
	\]
	where $o(1) \to 0$ as $t \to +\infty$. Note that $|\nabla \psi(x,\eta(x))| = O(e^{-\tfrac12 t})$ by  Theorem \ref{thmreg} and
	\begin{equation}\label{zetaest2}
	|\nabla \psi (x_1,\eta(x_1))-\nabla \psi (x_2,\eta(x_2))| \leq C e^{-\tfrac12 t} |t_1-t_2|^{\tfrac12}.
	\end{equation}
	Finally, since 
	\[
	|\zeta(t_1)-\zeta(t_2)| \leq C |t_1-t_2|
	\]
	we obtain from \eqref{zetaest1} and \eqref{zetaest2} that
	\[
	|\zeta_t(t_1) - \zeta_t(t_2)| \leq C |t_1 - t_2|^{\tfrac12}.
	\]
	This shows that \eqref{xiholder} is true for $\xi_q$. The remaining estimate for $\xi$ is now trivial.
\end{proof}

\begin{proof}[Proof of Proposition \ref{holderdecay}]
	Let $\gamma \in (0,\tfrac12)$ be given and assume that the claim is false so that there exists a sequence $\{q_j\}_{j=1}^{+\infty}$ accumulating at $+\infty$ and such that
	\begin{equation}\label{Phieps}	
		\|\Phi\|_{C^{1,\gamma}([q_j,q_j+1]\times[0,\pitwo])} \geq e^{-\tfrac32 q_j} \epsilon
	\end{equation}
	for some $\epsilon > 0$ and all $j \geq 1$. Then we consider functions
	\[
		\Phi^{(j)}(q,z) = \Phi(q+q_j,z)e^{\tfrac32(q+q_j)}, \ \ \Psi^{(j)}(q,z) = \Psi(q+q_j,z)e^{\tfrac32(q+q_j)}, \ \ \xi^{(j)}(q) = \xi(q+q_j).
	\]
	Note that by the definition we have
	\[
		\Phi = \bar{\psi} - \bar{U}- \tfrac3{\pi} z \bar{U}_z \xi
	\]
	so that Lemma \ref{lemmahatpsi} and Lemma \ref{lemmaxi} give
	\begin{equation}\label{hol12}	
		\|\Phi^{(j)}\|_{C^{1,\tfrac12}([0,+\infty) \times [0,\pitwo])},  \ \|\Psi^{(j)}\|_{C^{\tfrac12}([0,+\infty) \times [0,\pitwo])},  \ \|\xi^{(j)}\|_{C^{1,\tfrac12}([0,+\infty))} < + \infty.
	\end{equation}
	Then by the compactness we can find a subsequence $\{j_k\}$ such that functions $\Phi^{(j_k)}$, $\Psi^{(j_k)}$ and $\xi^{(j_k)}$ converge in every space $C^{1,\gamma}(I \times [0,\pitwo])$, $C^{\gamma}(I \times [0,\pitwo])$ and $C^{1,\gamma}(I)$ respectively for all compact intervals $I \subset \R \times [0,\pitwo]$ (though some finite number of functions may not be defined). We denote the limiting functions by $\Phi^{\infty}$, $\Psi^{\infty}$ and $\xi^{\infty}$. Note that $\xi^{\infty}$ is identically zero. These limiting functions are subject to certain equations that we will derive below. For this purpose we need to exploit a weak form of equations \eqref{sys:Phi}-\eqref{sys:Psi}.
	
	Let us multiply equations \eqref{sys:Phi}-\eqref{sys:Psi} by $e^{3/2 q}$ first, and then by some test functions $\varPhi(q+q_j,z)$ and $\varPsi_j(q+q_j,z)$ with compact supports. This leads to the equations
	\[
		\begin{split}
		-\iint \Phi^{(j)}(q,z) \varPhi_q(q,z) dqdz & = \tfrac32 \iint \left\{ \Phi^{(j)}(q,z)+ \Psi^{(j)}(q,z) \right\} \varPhi(q,z) dqdz \\
		& + \iint N_1(q,z) e^{\tfrac32 q} \varPhi(q+q_j,z) dqdz, \\
		-\iint \Psi^{(j)}(q,z) \varPsi_q(q,z) dqdz & = \tfrac32 \iint \Psi^{(j)}(q,z)dqdz +\tfrac32 \iint \Phi^{(j)}_{z}(q,z) \varPsi_z(q,z) dqdz \\
		& + \iint N_2(q,z) e^{\tfrac32 q} \varPsi(q+q_j,z) dqdz.		
		\end{split}
	\]
	The definitions of $N_1$ and $N_2$ and \eqref{hol12} imply that the integral terms with $N_1$ and $N_2$ tend to zero as $j \to +\infty$. Thus, the limiting functions solve
	\[
		\begin{split}
			-\iint \Phi^{\infty}(q,z) \varPhi_q(q,z) dqdz & = \tfrac32 \iint \left\{ \Phi^{\infty}(q,z)+ \Psi^{\infty}(q,z) \right\} \varPhi(q,z) dqdz \\
			-\iint \Psi^{\infty}(q,z) \varPsi_q(q,z) dqdz & = \tfrac32 \iint \Psi^{\infty}(q,z)dqdz +\tfrac32 \iint \Phi^{\infty}_{z}(q,z) \varPsi_z(q,z) dqdz.		
		\end{split}		
	\]
	This shows that $\Phi^{\infty}$ and $\Psi^{\infty}$ are infinitely smooth in $\R \times (0,\pi/2)$ and $\Phi^{\infty}$ is subject to
	\[
		\Phi^{\infty}_{qq} - 3 \Phi^{\infty}_q + \tfrac94 \Phi^{\infty} + \tfrac94 \Phi^{\infty}_{zz} = 0.
	\]
	On the other hand $\Phi_z^{\infty} = 0$ on $z=0$, while $\Phi_z^{\infty}=\Phi^{\infty} = 0$ on $z=\pitwo$, which follows from \eqref{bernfinal} and Lemma \ref{psiz:decay}. Together this guarantees that $\Phi^{\infty}$ is zero identically, though \eqref{Phieps} shows the opposite, leading to a contradiction. In a similar way one proves the claim about $\Psi$.
\end{proof}

\subsection{Reduction to homogenous boundary conditions} 

For a further analysis of \eqref{sys:Phi}-\eqref{sys:Psi} we need to replace \eqref{bernfinal} by a homogenous relation (with $N_3 = 0$). For this purpose we introduce
\begin{equation} \label{bernhomtmp}
\Phi^{\star}(q,z) = \Phi(q,z) + \tfrac{2}{\pi}\int_z^{\pitwo} \tau  N_3^\star(q)(\Phi(q,\cdot),\Psi(q,\cdot))(\tau) d \tau, \ \ \Psi^\star(q,z) = \Psi(q,z),
\end{equation}
where
\[
	\begin{split}
	N_3^\star(q)(\varPhi,\varPsi) = &  \left( \left( \tfrac34 + \tfrac{3}{\pi} \right) e^{-\tfrac32 q} - \tfrac{1}{\sqrt{3} \pi^2} \frac{(3\xi+\pi)^2(\sin\xi - \xi)}{\xi}  -\frac{9 \xi^2 \cos \xi + (\cos\xi - 1)(\pi^2+6 \pi \xi)}{3\pi^2 \xi} \right) \varPhi \\
	& +  \left( \tfrac34 \varPsi e^{\tfrac32 q} + \tfrac32 \xi \right) \varPsi + \left( \tfrac34 \varPhi_z e^{-\tfrac32 q}-\tfrac3{\pi} \xi \right) \varPhi_z.
	\end{split}
\]
The main purpose of \eqref{bernhomtmp} is that the nonlinear boundary relation \eqref{bernfinal} is transformed to
\begin{equation} \label{bernhom}
\begin{split}
\Phi^{\star}_z - \tfrac{1}{\sqrt{3}} \Phi^{\star} &= 0 \ \ \text{on} \ \ z = \pitwo,\\
\Phi^{\star}_z &= 0 \ \ \text{on} \ \ z = 0.
\end{split}
\end{equation}
Note that $N_3^\star(\Phi,\Psi) = N_3(\Phi,\Psi)$ for $z = \pitwo$. At the same time $N_3^\star$ is defined as a nonlinear operator from $H^{2}(0,\pitwo) \times H^{1}(0,\pitwo)$ to $H^{1}(0,\pitwo)$. Thus, for all sufficiently large $q$ our definition \eqref{bernhomtmp} determines a near-identical transformation in $H^{2}(0,\pitwo) \times H^{1}(0,\pitwo)$, which follows from Proposition \ref{holderdecay}. This allows to express
\begin{equation}\label{PhitoPhistar}
	(\Phi(q,z),\Psi(q,z)) = (B^{(1)}(q)(\Phi^\star(q,\cdot), \Psi^\star(q,\cdot))(z), B^{(2)}(q)(\Phi^\star(q,\cdot), \Psi^\star(q,\cdot))(z)),
\end{equation}
where 
\[
	\begin{split}
	& B^{(1)}(q):H^{2}(0,\pitwo) \times H^{1}(0,\pitwo) \to H^{2}(0,\pitwo), \\
	& B^{(2)}(q):H^{2}(0,\pitwo) \times H^{1}(0,\pitwo) \to H^{1}(0,\pitwo)
	\end{split}
\]
are bounded nonlinear operators and their norms are uniformly bounded for large $q$.

Let us differentiate \eqref{bernhomtmp} with respect to $q$-variable, which leads to
\begin{equation}\label{Phistarq}
\Phi^{\star}_q = \Phi_q + \tfrac{2}{\pi} \int_z^{\pitwo} \tau \frac{\partial N_3^\star}{\partial q} (\Phi,\Psi)  + N_4^\star d\tau,
\end{equation}
where 
\[
	\begin{split}
		N_4^\star = &  \left( \left( \tfrac34 + \tfrac{3}{\pi} \right) e^{-\tfrac32 q} - \tfrac{1}{\sqrt{3} \pi^2} \frac{(3\xi+\pi)^2(\sin\xi - \xi)}{\xi}  -\frac{9 \xi^2 \cos \xi + (\cos\xi - 1)(\pi^2+6 \pi \xi)}{3\pi^2 \xi} \right) \Phi_q \\
		& +  \left( \tfrac32 \Psi e^{-\tfrac32 q} + \tfrac32 \xi e^{-\tfrac32 q} \right) \Psi_q + \left( \tfrac32 \Phi_z e^{-\tfrac32 q}-\tfrac4{\pi}e^{-\tfrac32 q} \right) \Phi_{zq}.
	\end{split}
\]
Now we can replace $\Phi_q$ and $\Psi_q$ in the latter formula by using \eqref{sys:Phi} and \eqref{sys:Psi}. Furthermore, in the corresponding expressions for $N_1$ and $N_2$ given by \eqref{N12linear} we replace $\Phi$ and $\Psi$ by using \eqref{PhitoPhistar}. In the same way we replace all remaining occurrences of $\Phi$ and $\Psi$ in \eqref{Phistarq}. After the same procedure for $\Psi^\star_q$ we obtain that
\begin{align}
& \Phi^{\star}_q = \tfrac32 \Psi^{\star} + N_1^{\star}(q)(\Phi^{\star}(q,\cdot),\Psi^{\star}(q,\cdot)),  \label{sys:Phistar}\\
& \Psi_q^\star =  -\tfrac32 \Phi_{zz}^\star + N_2^\star(q)(\Phi^{\star}(q,\cdot),\Psi^{\star}(q,\cdot)) + f, \label{sys:Psistar}
\end{align}
where $N_1^\star(q):H^{2}(0,\pitwo) \times H^{1}(0,\pitwo) \to H^{1}(0,\pitwo)$ and $N_2^\star(q):H^{2}(0,\pitwo) \times H^{1}(0,\pitwo) \to L^2(0,\pitwo)$ are bounded nonlinear operators for all sufficiently large $q$ and the corresponding norms satisfy
\begin{equation}\label{decayN12}
	\|N_1^\star(q)\|, \|N_2^\star(q)\| \to 0 \ \ \text{as} \ \ q \to +\infty.
\end{equation}
More precisely, $N_1^\star$, $N_2^\star$ and $f$ are given by
\begin{equation}\label{N12star}
\begin{split}
N_1^\star  &= -\frac{9\xi}{2(\pi+3\xi)} B^{(2)}   - \frac{9 z \sin z \xi}{\pi  + 3 \xi} (B^{(1)})|_{z=\pitwo} - \frac{6 z^2 \cos z \xi_q}{\pi  + 3 \xi} (B^{(1)})|_{z=\pitwo} + \frac{3z \xi_q}{(\pi + 3 \xi)} (B^{(1)})_z , \\
N_2^\star  &= -\frac{9 \xi (\cos z - z \sin z)}{\pi (\pi + 3\xi)} (B^{(1)})|_{z=\pitwo} + \frac{3 \xi_q}{(\pi + 3\xi)} (z B^{(2)} )_z + \frac{6z (\cos z + z \sin z) \xi_q}{\pi (\pi + 3\xi)} (B^{(1)})|_{z=\pitwo} \\
& + \frac{9\xi}{2(\pi + 3\xi)}(B^{(1)})_{zz} + \frac{2\hat{\omega}(\bar{\psi})e^{-\tfrac12 q}}{\pi}(B^{(1)})|_{z=\pitwo}, \\
 f &=  \tfrac{2}{3} \omega(1) e^{-2q} + \frac{2 (\hat{\omega}(\bar{\psi})-\hat{\omega}(0)) (\pi + 3 \xi)}{3 \pi}   e^{-2 q}.
\end{split}
\end{equation}

Note that the system \eqref{sys:Phistar}-\eqref{sys:Psistar} is equipped with the homogenious boundary relations \eqref{bernhom}.

\subsection{The model linear problem} At the positive infinity the nonlinear system \eqref{sys:Phistar}-\eqref{sys:Psistar} is reduced to the linear problem
\begin{align}
& \varPhi_q^\star = \tfrac32 \varPsi^\star,  \nonumber\\
& \varPsi_q^\star =  -\tfrac32 \varPhi^\star_{zz} \nonumber,
\end{align}
where $\varPhi^\star$ is subject to the boundary relations \eqref{bernhom}. The corresponding spaces for $\varPhi^\star$ and $\varPsi^\star$ are
\[
\begin{split}
& X_1 = \{ \varPhi \in H^{2}(0,\pitwo) :  \ \varPhi_z(0) = 0, \ \ \varPhi_z(\pitwo) - \tfrac{1}{\sqrt{3}}\varPhi(\pitwo) = 0\}, \\
& X_2 = H^1(0,\pitwo). 
\end{split}
\]
Furthermore, we define the range spaces as
\[
Y_1 = H^1(0,\pitwo), \ \ Y_2 = L^2(0,\pitwo).
\]
Next we consider the corresponding linear operator ${\mathcal L} : W^{1,2}_{loc}(\R; X_1 \times X_2) \to L^2_{loc}(\R; Y_1 \times Y_2)$ given by
\[
{\mathcal L}(\varPhi,\varPsi) = \left(\varPhi_q - \tfrac32 \varPsi, \varPsi_q + \tfrac32 \varPhi_{zz}\right).
\]
For a given $\beta > 0$ and an interval $I \subset \R$ we define weighted spaces $W^{1,2}_{\beta}(I; X_1 \times X_2)$ and $L^2_{\beta}(I; Y_1 \times Y_2)$ as subspaces of $W^{1,2}_{loc}(I; X_1 \times X_2)$ and $L^2_{loc}(I; Y_1 \times Y_2)$ of functions with finite norms
\[
\begin{split}
& \|x\|_{W^{1,2}_{\beta}(I)} = \left(\int_I e^{2\beta q} \left[ \|x(q)\|_{X_1 \times X_2}^2 + \|D_q x(q)\|_{Y_1 \times Y_2}^2 \right] dq \right)^{\tfrac12} \\
& \|x\|_{L^2_{\beta}(I)} = \left(\int_I e^{2\beta q} \|x(q)\|_{Y_1 \times Y_2}^2 dq \right)^{\tfrac12}.
\end{split}
\]
Let us study the kernel of ${\mathcal L} : W^{1,2}_{loc}(\R; X_1 \times X_2) \to L^2_{loc}(\R; Y_1 \times Y_2)$. Separating variables one finds that the kernel is spanned by the following functions
\[
\varphi_j(z) e^{\pm \tfrac32 \tau_j q}, \ \ j=0,1,2,...,
\]
where $\varphi_j$ and $\mu_j:=\tau_j^2$ are the corresponding eigenpairs of the following Sturm--Liouville problem
\begin{equation}\label{SLproblem}
-\varphi'' = \mu_j \varphi, \ \ z \in (0,\pitwo); \ \ \varphi_z(0) = 0, \ \ \varphi_z(\pitwo)-\tfrac1{\sqrt{3}} \varphi(\pitwo) = 0.
\end{equation}
The latter has a discrete spectrum accumulating at the positive infinity, while the first eigenvalue $\mu_0$ is negative. The second eigenvalue $\mu_1 = \tau_1^2$ can be found as the smallest positive solution to
\[
\tau_1 = - \tfrac{1}{\sqrt{3}} \cot(\pitwo \tau_1)
\] 
and is approximately given by
\[
\tau_1 \approx 1.8.
\]
Note that the numbers $\tau_j$ are the same as $1+\beta_j$ in Amick and Fraenkel \cite{Amick1987a}, where similar asymptotics were studied in the infinite depth case.\\

Our proof will be based on the following basic fact about ${\mathcal L}$. 

\begin{proposition}\label{p:L} The operator ${\mathcal L}:W^{1,2}_{\beta}(\R; X_1 \times X_2) \to L^2_{\beta}(\R; Y_1 \times Y_2)$ is invertible for any $\beta > 0$, provided $\beta \neq \thtw \tau_j$, $j \geq 1$.
\end{proposition}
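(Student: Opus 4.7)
The plan is to invert $\mathcal{L}$ through a Fourier-analytic reduction in the longitudinal variable $q$, following the standard strategy for constant-coefficient elliptic systems on cylinders. First I would remove the weight by setting $\tilde{x}(q) = e^{\beta q} x(q)$, which identifies $W^{1,2}_\beta$ isometrically with an unweighted Sobolev space of Bochner type with the derivative measured in $Y_1 \times Y_2$. Applying the Fourier transform in $q$ and writing $\lambda = i\xi - \beta$, the equation $\mathcal{L} x = y$ becomes, for each $\xi \in \R$, the parametrized boundary-value problem
\[
\lambda\, \hat{\varPhi}(\xi,\cdot) - \tfrac{3}{2}\hat{\varPsi}(\xi,\cdot) = \hat{f}(\xi,\cdot), \qquad \lambda\, \hat{\varPsi}(\xi,\cdot) + \tfrac{3}{2}\hat{\varPhi}_{zz}(\xi,\cdot) = \hat{g}(\xi,\cdot),
\]
with $\hat{\varPhi}(\xi,\cdot) \in X_1$ and $\hat{\varPsi}(\xi,\cdot) \in X_2$.

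Next I would diagonalize the transverse operator by expanding everything in the Sturm--Liouville basis $\{\varphi_j\}$ from \eqref{SLproblem}. This decouples the symbol into scalar $2\times 2$ systems indexed by $j \geq 0$, each with determinant $D_j(\lambda) = \lambda^2 - \tfrac{9}{4}\mu_j$; Cramer's rule gives
\[
a_j = \tfrac{\lambda f_j + (3/2) g_j}{D_j(\lambda)}, \qquad b_j = \tfrac{(3\mu_j/2) f_j + \lambda g_j}{D_j(\lambda)}.
\]
The zero set of $D_j$ consists of the spectral points $\pm \tfrac{3}{2}\tau_j$; since $\lambda^2 = \beta^2 - \xi^2 - 2i\beta\xi$, the equation $D_j(\lambda) = 0$ on the line $\operatorname{Re}\lambda = -\beta$ forces $\xi = 0$ and $\beta^2 = (9/4)\mu_j$, which is excluded by hypothesis for $j \geq 1$ and impossible for $j = 0$ (where $\mu_0 < 0$). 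So the symbol is invertible pointwise.

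The principal obstacle is the uniform resolvent estimate needed to pass from pointwise invertibility to operator-norm invertibility. The crux is the lower bound
\[
|D_j(\lambda)|^2 = (\beta^2 - \xi^2 - \tfrac{9}{4}\mu_j)^2 + 4\beta^2\xi^2 \;\geq\; c(\beta)\,(1+\mu_j^2) \qquad \text{for all } \xi \in \R,\ j \geq 0,
\]
which I would prove by minimizing in $\xi \geq 0$: for $\mu_j \geq 0$ the minimum is attained at $\xi = 0$ and equals $(\beta^2 - (9/4)\mu_j)^2$, bounded below by a positive multiple of $1+\mu_j^2$ thanks to the non-resonance hypothesis and the divergence $\mu_j \to \infty$; the $j=0$ case is handled similarly using the imaginary contribution $4\beta^2\xi^2$. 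Combined with the explicit formulas for $a_j, b_j$ and the spectral norm identifications $\|\hat{\varPhi}\|_{X_1}^2 \sim \sum(1+\mu_j^2)|a_j|^2$, $\|\hat{\varPsi}\|_{X_2}^2 \sim \sum(1+\mu_j)|b_j|^2$, $\|\hat{f}\|_{Y_1}^2 \sim \sum(1+\mu_j)|f_j|^2$, $\|\hat{g}\|_{Y_2}^2 \sim \sum|g_j|^2$, a routine case analysis in the regions $|\xi| \ll \sqrt{\mu_j}$, $|\xi| \sim \sqrt{\mu_j}$, and $|\xi| \gg \sqrt{\mu_j}$ yields the mode-wise bound
\[
\bigl(1+\mu_j^2+|\lambda|^2(1+\mu_j)\bigr)|a_j|^2 + \bigl(1+\mu_j + |\lambda|^2\bigr)|b_j|^2 \;\leq\; C(\beta)\bigl[(1+\mu_j)|f_j|^2 + |g_j|^2\bigr]
\]
uniformly in $\xi$ and $j$. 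Summing in $j$, integrating in $\xi$, and applying Plancherel's identity in the form $\|x\|_{W^{1,2}_\beta}^2 \sim \int_\R (\|\hat{\tilde{x}}\|_{X_1 \times X_2}^2 + |\lambda|^2\|\hat{\tilde{x}}\|_{Y_1 \times Y_2}^2)\,d\xi$ converts this symbol estimate into the desired bounded invertibility $\|\mathcal{L}^{-1} y\|_{W^{1,2}_\beta} \leq C\|y\|_{L^2_\beta}$.
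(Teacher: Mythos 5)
Your argument is correct, but it takes a deliberately more explicit route than the paper does. The paper invokes the abstract solvability theorem for operator differential equations on cylinders from the monograph of Kozlov, Maz'ya and Rossmann \cite[Theorem~2.4.1]{Kozlov1999}; the invertibility then follows in one line because the operator pencil $A(\lambda)$ associated with ${\mathcal L}$ has no spectrum on the line $\operatorname{Re}\lambda=-\beta$ precisely when $\beta\neq\thtw\tau_j$. What you do instead is reprove that abstract result from scratch in this special case: conjugate away the weight, Fourier transform in $q$, diagonalize in the Sturm--Liouville basis $\{\varphi_j\}$ to reduce to explicit $2\times2$ systems, and then establish the uniform lower bound $|D_j(\lambda)|^2\geq c(\beta)(1+\mu_j^2)$ on $\operatorname{Re}\lambda=-\beta$ by direct minimization in $\xi$. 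I checked the key computation: with $s=\xi^2$ and $M=\tfrac94\mu_j$ the quantity $(\beta^2-s-M)^2+4\beta^2 s$ has derivative $2(s+M+\beta^2)$ in $s$, so for $\mu_j\geq0$ it is monotone and the minimum is indeed at $\xi=0$, and for $j=0$ (where $\mu_0<0$) the minimum value is $-4\beta^2 M>0$ if the interior critical point is admissible; either way a positive lower bound holds, and your mode-wise estimate follows from this together with Cramer's rule. Your approach buys explicitness and self-containedness; the paper's citation buys brevity and also yields, for free, the general Fredholm and index theory on intervals and the asymptotic representation of solutions which the paper uses afterwards. Two small imprecisions you should fix if you write this out: the weight removal $\tilde x=e^{\beta q}x$ is an isomorphism with equivalent norms but not an isometry, because $D_q\tilde x=e^{\beta q}(D_q x+\beta x)$ mixes the zero- and first-order parts; and the spectral norm equivalences should be written with $1+|\mu_j|$ (or with a sufficiently large shift) rather than $1+\mu_j$, since $\mu_0<0$, and the identification $\|\hat f\|_{Y_1}^2\sim\sum(1+|\mu_j|)|f_j|^2$ for $Y_1=H^1$, which carries no boundary condition, rests on the fact that $H^1(0,\pitwo)$ is exactly the form domain of the Sturm--Liouville operator --- this deserves a sentence.
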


The statement follows directly from \cite[Theorem 2.4.1]{Kozlov1999}. 

\subsection{A higher-order exponential decay}

In order to employ Proposition \ref{p:L} we need the following preliminary result.

\begin{proposition} \label{l:inspace}
	There exists $\beta_0 \in (0,3/2)$ such that $(\chi \Phi^\star, \chi \Psi^\star) \in W^{1,2}_{\beta_0}(\R;X_1 \times X_2)$ for some cut-off function $\chi$.
\end{proposition}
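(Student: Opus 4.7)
The plan is a direct cut-off argument: since the weight $e^{2\beta_0 q}$ with $\beta_0 < \tfrac{3}{2}$ is strictly dominated by the $e^{-3q}$ decay of the squared norms of $(\Phi^\star,\Psi^\star)$, pointwise-in-$q$ bounds of order $e^{-3q/2}$ in the appropriate spatial norms will make the weighted integral over $[Q_0,\infty)$ finite, while the cut-off handles the finite-$q$ end. Concretely, I fix a sufficiently large $Q_0$ and choose $\chi \in C^{\infty}(\R)$ with $\chi \equiv 0$ on $(-\infty, Q_0]$ and $\chi \equiv 1$ on $[Q_0+1, \infty)$.

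The first task is to upgrade the available decay to the spatial norms used here, namely $\|\Phi^\star(q,\cdot)\|_{X_1} + \|\Psi^\star(q,\cdot)\|_{X_2} \leq C e^{-\tfrac{3}{2} q}$. Lemma \ref{lemmahatpsi} gives $\|\hat{\psi}\|_{C^{2,\gamma}(\hat{S}_t)} \leq C e^{-\tfrac{3}{2} t}$, and since the flattening map $z = \tfrac{\pi}{2}(\theta + \tfrac{\pi}{2})/(\zeta(q) + \tfrac{\pi}{2})$ is linear in $\theta$ at each fixed $q$, this transfers to a $C^{2,\gamma}$-in-$z$ bound on $\bar{\psi}(q,\cdot)$ of the same order. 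Subtracting the explicit $\bar{U} = \tfrac{2}{3} e^{-\tfrac{3}{2}q}\cos z$ and the correction $\tfrac{3}{\pi}z\bar{U}_z \xi$ (which is $O(e^{-\tfrac{3}{2}q})$ since $\xi$ is bounded by Lemma \ref{lemmaxi}), then applying the near-identity transformation \eqref{bernhomtmp}--\eqref{PhitoPhistar}, preserves the $O(e^{-\tfrac{3}{2}q})$ control; the boundary conditions encoded in $X_1$ hold by construction of $\Phi^\star$ via \eqref{bernhom}. The $q$-derivatives are then handled through the system \eqref{sys:Phistar}--\eqref{sys:Psistar} itself: from $\Phi^\star_q = \tfrac{3}{2} \Psi^\star + N_1^\star$ the linear term inherits the spatial bound, and inspection of \eqref{N12star} shows that each term of $N_1^\star$ carries a factor of $\xi$ or $\xi_q$, both of which tend to zero by Lemma \ref{lemmaxi}; combined with \eqref{decayN12}, this yields $\|N_1^\star\|_{Y_1} = o(e^{-\tfrac{3}{2}q})$. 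The analogous argument for $\Psi^\star_q = -\tfrac{3}{2} \Phi^\star_{zz} + N_2^\star + f$ uses the $H^2$-control on $\Phi^\star$ together with $f = O(e^{-2q})$, which decays strictly faster.

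Putting this together, the commutator $\chi'(\Phi^\star,\Psi^\star)$ is supported in $[Q_0, Q_0+1]$ and contributes a finite constant, while the principal integral reduces to $C\int_{Q_0}^{\infty} e^{(2\beta_0 - 3)q}\,dq$, which is finite for every $\beta_0 \in (0, \tfrac{3}{2})$. The main delicate point is that the available decay rate $e^{-\tfrac{3}{2}q}$ is exactly that of the leading term $\bar{U}$, so the strict inequality $\beta_0 < \tfrac{3}{2}$ cannot be improved at this stage; pushing $\beta_0$ past $\tfrac{3}{2}$ will require combining the invertibility of ${\mathcal L}$ (Proposition \ref{p:L}) with an explicit extraction of the $\omega(1)$ term from the forcing $f$, which is the subsequent step in the outline.
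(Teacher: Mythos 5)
Your proof reaches the same conclusion but by a genuinely different route from the paper's. The paper does not try to establish the pointwise-in-$q$ bound $\|(\Phi^\star,\Psi^\star)(q,\cdot)\|_{X_1\times X_2}\le Ce^{-3q/2}$ directly. Instead it applies a Schauder-type local estimate for first-order operator systems (Lemma~2.9.1 of \cite{Kozlov1999}) on unit $q$-intervals, which bounds $\|(\Phi^\star,\Psi^\star)\|_{W^{1,2}(I;X_1\times X_2)}$ by $\|N_1^\star\|+\|N_2^\star\|+\|f\|$ plus the much weaker $L^2(I;L^2)$ norms of $\Phi^\star,\Psi^\star$; only those weak $L^2$-norms and $\|f\|$ are controlled via Lemma~\ref{lemmahatpsi}, while the contributions of $N_1^\star,N_2^\star$ (which are of the same order as the unknowns themselves) are closed by an absorption argument using \eqref{decayN12}. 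You bypass the Schauder step entirely: you read the spatial $H^2\times H^1$ bound directly from the $C^{2,\gamma}$ control of Lemma~\ref{lemmahatpsi} pushed through the chain $\hat\psi\to\bar\psi\to\Phi,\Psi\to\Phi^\star,\Psi^\star$, and then recover the $q$-derivative bound by substituting into the system \eqref{sys:Phistar}--\eqref{sys:Psistar}. This is more elementary and self-contained, but it leans on a careful unwinding of the near-identity transformation \eqref{bernhomtmp}: the quadratic terms in $N_3^\star$ carry coefficients of size $e^{3q/2}$, and it is the $o(e^{-3q/2})$ decay of Proposition~\ref{holderdecay} (not merely the $O(e^{-3q/2})$ bound of Lemma~\ref{lemmahatpsi}) that tames them and makes the map invertible with bounds; you gesture at this via ``near-identity'' but should state it explicitly. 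The paper's Schauder route is more robust against such bookkeeping and sets up machinery that is reused in Propositions~\ref{phistardecay}, \ref{C2gamma}, and \ref{phidaggerdecay}; your route is shorter for this single statement but would have to be redone for the later propositions. Both arguments are correct.
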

By a cut-off function we mean a smooth function $\chi(q)$ such that $\chi(q) = 1$ for $q > q_0+1$ and $\chi(q) = 0$ for $q < q_0$ with some $q_0>0$.
\begin{proof}
	To prove the claim we apply Shauder type estimates to the system  \eqref{sys:Phistar}-\eqref{sys:Psistar}. Thus, for intervals $I = [q,q+1]$ and $I_1 = [q-1,q+1]$ we apply \cite[Lemma 2.9.1]{Kozlov1999} and obtain a local estimate
	\[
	\begin{split}
	  \|(\Phi^\star,\Psi^\star)\|_{W^{1,2}(I; X_1 \times X_2)} \leq  C \Big\{ & \|N_1^\star\|_{L^{2}(I_1;Y_1)} +\|N_2^\star\|_{L^{2}(I_1;Y_2)} + \|f\|_{W^{1,2}(I_1; Y_2)} + \\ 
	 &  \|\Phi^\star\|_{L^{2}(I_1;L^2(0,\pitwo)))}+ \|\Psi^\star\|_{L^{2}(I_1;L^2(0,\pitwo)))} \Big\}.
	\end{split}
	\]
	Let $\chi$ and $\beta_0 \in (0,3/2)$ be given. Then using the latter inequality we can estimate
	\begin{equation}\label{shauder1}
		\begin{split}
		\|(\chi \Phi^\star, \chi \Psi^\star)\|_{W^{1,2}_{\beta_0}}^2 & \leq C \int_{q_0-1}^{+\infty} \|(\Phi^\star,\Psi^\star)\|_{W^{1,2}(I; X_1 \times X_2)}^2 e^{2\beta_0 q} dq \\
		& \leq C \int_{q_0-1}^{+\infty} \max_{I_1} \left\{\|N_1^\star(q)\|^2+\|N_2^\star(q)\|^2\right\} \cdot \|(\Phi^\star,\Psi^\star)\|_{W^{1,2}(I_1; X_1 \times X_2)}^2 \\
		& + C e^{-(\tfrac{3}{2}-\beta_0) q_0}.
		\end{split}		 
	\end{equation}
	Here we used the fact that 
		\[
		\|\Phi^\star\|_{L^\infty(I_1\times[0,\pitwo])}, \|\Psi^\star\|_{L^\infty(I_1\times[0,\pitwo])} = O(e^{-\tfrac32 q}),
		\]
	which is a consequence of Lemma \eqref{lemmahatpsi}. Note that in view of \eqref{decayN12} we find from \eqref{shauder1} that
	\[
			\|(\chi \Phi^\star, \chi \Psi^\star)\|_{W^{1,2}_{\beta_0}}^2 \leq C + C \epsilon \|(\chi \Phi^\star, \chi \Psi^\star)\|_{W^{1,2}_{\beta_0}}^2,
	\]
	where $\epsilon \to 0$ as $q_0 \to +\infty$. Thus, choosing $\epsilon$ to be small enough and subtracting the corresponding term we obtain the desired estimate.
\end{proof}

Now we are ready to establish a higher-order weak decay for $\Phi^\star$ and $\Psi^\star$. More precisely we prove

\begin{proposition}\label{phistardecay} For any $\beta \in (0,2)$ there exists a cut-off function $\chi$ such that $(\chi \Phi^\star, \chi \Psi^\star) \in W^{1,2}_{\beta}(\R;X_1 \times X_2)$.
\end{proposition}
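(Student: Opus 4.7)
The plan is to deduce the conclusion from the invertibility of $\mathcal{L}$ (Proposition \ref{p:L}) by showing that $g := \mathcal{L}(\chi\Phi^\star, \chi\Psi^\star)$ lies in $L^2_\beta$ for any target $\beta \in (0, 2)$ and then identifying $\mathcal{L}^{-1}g$ with $(\chi\Phi^\star, \chi\Psi^\star)$ via uniqueness. Multiplying \eqref{sys:Phistar}--\eqref{sys:Psistar} by $\chi$ and collecting terms yields
\[
g = (\chi'\Phi^\star,\, \chi'\Psi^\star) + \chi\bigl(N_1^\star(\Phi^\star,\Psi^\star),\, N_2^\star(\Phi^\star,\Psi^\star) + f\bigr).
\]
The commutator $(\chi'\Phi^\star, \chi'\Psi^\star)$ is supported on $[q_0, q_0+1]$ and lies in every $L^2_\beta$; the explicit formula for $f$ in \eqref{N12star} gives $|f|\leq C e^{-2q}$, so $\chi f \in L^2_\beta$ for every $\beta < 2$. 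The delicate piece is $\chi(N_1^\star, N_2^\star)$.

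For the nonlinear piece I would exploit \eqref{decayN12}: by taking $\chi$ supported in $[q_0, \infty)$ with $q_0$ large enough, one has $\|N_j^\star(q)\|_{\mathrm{op}} < \epsilon$ uniformly on $\mathrm{supp}(\chi)$, giving
\[
\|\chi N_j^\star(\Phi^\star, \Psi^\star)\|_{L^2_\beta} \leq \epsilon\,\|(\chi\Phi^\star, \chi\Psi^\star)\|_{L^2_\beta} \leq \epsilon\,\|(\chi\Phi^\star, \chi\Psi^\star)\|_{W^{1,2}_\beta}.
\]
Together with the a priori estimate $\|x\|_{W^{1,2}_\beta} \leq C_\beta\|\mathcal{L}x\|_{L^2_\beta}$ from Proposition \ref{p:L}, this would absorb the nonlinear contribution into the left-hand side. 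Since the absorption presupposes $(\chi\Phi^\star, \chi\Psi^\star) \in W^{1,2}_\beta$ -- the very conclusion sought -- I plan to circumvent the circularity by introducing truncated cut-offs $\chi_M(q) := \chi(q)\theta_M(q)$, where $\theta_M$ is smooth, equals $1$ on $(-\infty, M]$, and vanishes on $[M+1, \infty)$. Then $x_M := (\chi_M\Phi^\star, \chi_M\Psi^\star)$ has compact support and lies in $W^{1,2}_\beta$ automatically, and the absorption yields
\[
\|x_M\|_{W^{1,2}_\beta} \leq 2C_\beta\bigl(\|\chi_M'(\Phi^\star,\Psi^\star)\|_{L^2_\beta} + \|\chi_M f\|_{L^2_\beta}\bigr)
\]
once $\epsilon < 1/(2C_\beta)$. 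Weak compactness in the reflexive space $W^{1,2}_\beta$ then produces a weak limit as $M \to \infty$, and uniqueness in the larger space $W^{1,2}_{\beta_0}$ (via Proposition \ref{p:L} applied at the weight $\beta_0$ of Proposition \ref{l:inspace}) identifies this limit with $(\chi\Phi^\star, \chi\Psi^\star)$.

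The main obstacle is bounding the boundary contribution $\int_M^{M+1} e^{2\beta q}\|(\Phi^\star, \Psi^\star)\|^2\,dq$ arising from $\theta_M'$ near $q = M$ uniformly in $M$. This requires pointwise decay $\|(\Phi^\star, \Psi^\star)(q)\|_{X_1 \times X_2} \leq C e^{-\beta q}$, which is not supplied directly by Lemma \ref{lemmahatpsi} once $\beta > 3/2$. To secure it I would bootstrap, starting from the base $\beta_0 < 3/2$ of Proposition \ref{l:inspace}: the Sobolev embedding $W^{1,2}_{\beta_k}(\R; X_1 \times X_2) \hookrightarrow L^\infty_{\beta_k}$ converts integral decay at rate $\beta_k$ into pointwise decay at the same rate, which sharpens the $L^2_\beta$ estimate of $\chi N_j^\star$ in two ways. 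First, the explicit $e^{-q/2}$ factor in the $\hat\omega$-term of $N_2^\star$, combined with the trace identity $\Phi^\star|_{\pitwo}(q) = e^{-\tfrac32 q}\xi(q)$, gives an $O(e^{-2q})$ contribution usable for any $\beta < 2$. Second, once $\beta_k$ crosses $3/2$, the relation $\xi = e^{\tfrac32 q}\Phi^\star|_{\pitwo}$ yields the decaying bound $|\xi| \leq C e^{(3/2-\beta_k)q}$, which makes the remaining $\xi$- and $\xi_q$-terms in $N_1^\star$ and $N_2^\star$ small enough to permit a further gain of order $\beta_{k+1} \sim 2\beta_k - 3/2$. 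Iterating drives $\beta_k$ arbitrarily close to $2$, and since $\tfrac{3}{2}\tau_1 \approx 2.7 > 2$ no eigenvalue of $\mathcal{L}$ obstructs the admissible range of weights, so the iteration can be terminated at any chosen $\beta < 2$.
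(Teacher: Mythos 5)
The paper proves this by a uniqueness argument, not by a direct a priori estimate, and this is the key idea your proposal misses. Writing the equation as $({\mathcal L}-{\mathcal N})(\varPhi,\varPsi)=(\chi_1,\chi_2+\chi f)$ with a compactly supported right-hand side plus a piece in every $L^2_\beta$ with $\beta<2$, and observing that ${\mathcal N}$ has small operator norm on $W^{1,2}_\beta$ whenever $q_0$ is large (because $\|N_j^\star(q)\|\to 0$), one concludes that ${\mathcal L}-{\mathcal N}$ is boundedly invertible on $W^{1,2}_\beta$ for \emph{each} $\beta\in(0,2)$. Thus the equation possesses a unique solution in $W^{1,2}_\beta$, and separately a unique solution in $W^{1,2}_{\beta_0}$. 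Since the solutions are supported on $[q_0,\infty)$, we have $W^{1,2}_\beta\subset W^{1,2}_{\beta_0}$, so the $W^{1,2}_\beta$ solution is also a $W^{1,2}_{\beta_0}$ solution; by uniqueness in $W^{1,2}_{\beta_0}$ and Proposition~\ref{l:inspace} it must be $(\chi\Phi^\star,\chi\Psi^\star)$, which therefore lies in $W^{1,2}_\beta$. The point is that the existence of a $W^{1,2}_\beta$ solution comes for free from operator theory; one never has to estimate $\|{\mathcal L}(\chi\Phi^\star,\chi\Psi^\star)\|_{L^2_\beta}$ directly.

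Your truncation-and-bootstrap route has a genuine gap. You correctly identify that the boundary contribution of $\theta_M'$ requires a uniform bound on $\int_M^{M+1}e^{2\beta q}\|(\Phi^\star,\Psi^\star)\|^2\,dq$, which in turn needs pointwise decay at rate at least $\beta$. But the bootstrap you propose cannot produce this. Starting from the base $\beta_0<3/2$, the relation $\xi=e^{\tfrac32 q}\Phi^\star|_{\pitwo}$ gives $|\xi|\lesssim e^{(3/2-\beta_0)q}$, which is a \emph{growing} bound since $\beta_0<3/2$; it carries no information beyond the already known $\xi=o(1)$. Consequently the $\xi$- and $\xi_q$-weighted terms in $N_1^\star,N_2^\star$ contribute only $o(1)\cdot O(e^{-\beta_0 q})$, an improvement by a vanishing factor but not by any exponential rate, so the truncated estimate closes only at $\beta=\beta_0$ and never advances. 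You acknowledge the iteration kicks in ``once $\beta_k$ crosses $3/2$'', but supply no mechanism for crossing $3/2$; the single $\hat\omega$-term you single out is indeed $O(e^{-2q})$, but the remaining $\xi$-terms stall the argument. The circularity you flagged at the outset is thus not resolved by the truncation; it is merely relocated to the commutator boundary term. The uniqueness argument in the paper is the device that actually breaks this circle.
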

\begin{proof}
	Let us multiply \eqref{sys:Phistar}-\eqref{sys:Psistar} by some cut-off function $\chi$ and write the corresponding equations as
	\[
		{\mathcal L}(\chi \Phi^\star, \chi \Psi^\star) - {\mathcal N}(\chi \Phi^\star, \chi \Psi^\star) = (\chi_1,\chi_2 + \chi f),
	\]
	where $\chi_1$ and $\chi_2$ are cut-off functions and 
	\[
		f =  \tfrac{2}{3} \omega(1) e^{-2q} + \frac{2 (\hat{\omega}(\bar{\psi})-\hat{\omega}(0)) (\pi + 3 \xi)}{3 \pi}   e^{-2 q};
	\]
	the nonlinear operator $N$ is defined as
	\[
		{\mathcal N}(\varPhi, \varPsi) = \big(\chi N_1^\star (\varPhi, \varPsi),\chi N_2^\star (\varPhi, \varPsi)\big).
	\]
	Given $\beta \in (0,2)$ the operator ${\mathcal L}:W^{1,2}_{\beta}(\R; X_1 \times X_2) \to L^2_{\beta}(\R; Y_1 \times Y_2)$ is invertible by Proposition \ref{p:L} since the interval $(0,2)$ is free from eigenvalues $\mu_j = (3/2)\tau_j$. On the other hand, the norm of the operator
	\[
		{\mathcal N}:W^{1,2}_{\beta}(\R; X_1 \times X_2) \to L^2_{\beta}(\R; Y_1 \times Y_2)
	\]
	is small, provided $q_0$ (in the definition of the cut-off function) is large enough. Therefore, the operator ${\mathcal L} - {\mathcal N}$ is invertible in the latter spaces so that the equation
	\[
			{\mathcal L}(\varPhi^\star, \varPsi^\star) - {\mathcal N}(\varPhi^\star, \varPsi^\star) = (\chi_1,\chi_2 + \chi f)
	\]
	has a unique solution in $(\varPhi^\star, \varPsi^\star) \in W^{1,2}_{\beta}(\R; X_1 \times X_2)$. Finally, the unique solubility in $W^{1,2}_{\beta_0}(\R; X_1 \times X_2)$ (with $\beta_0$ from Proposition \ref{l:inspace}) and Proposition \ref{l:inspace} give that $\varPhi^\star = \chi \Phi^\star$ and $\varPsi^\star = \chi \Psi^\star$. This finishes the proof.
\end{proof}

It immediately follows from Proposition \ref{phistardecay} that $(\chi \Phi, \chi \Psi) \in W^{1,2}_{\beta}(\R;H^2(0,\pi/2) \times H^1(0,\pi/2))$ for any $\beta \in (0,2)$. This shows that
\begin{equation}\label{phiexpdecay}
	\Phi(q,z) = O(e^{-\beta q}) \ \ \text{as} \ \ q \to +\infty
\end{equation}
uniformly in $z$. Thus, we find from \eqref{xiphi} that
\begin{equation}\label{xiexpdecay}
	\xi(q) = O(e^{-\beta_0 q})
\end{equation}
for any $\beta_0 \in (0,1/2)$. 

\begin{proposition}\label{C2gamma} For any $\gamma \in (0,1/2)$ and $\beta \in (0,2)$ there exist $C>0$ and $q_0>0$ such that
	\[
		\|\Phi\|_{C^{2,\gamma}([q,q+1]\times [0,\pitwo])} \leq C e^{-\beta q}
	\]
	for all $q \geq q_0$.
\end{proposition}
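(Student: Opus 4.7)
The strategy is to eliminate $\Psi$ from \eqref{sys:Phi}-\eqref{sys:Psi} and apply rescaled Schauder estimates to a single second-order elliptic equation for $\Phi$. Differentiating \eqref{sys:Phi} in $q$ and substituting \eqref{sys:Psi} gives
\begin{equation}\label{proof:ell}
\Phi_{qq} + \tfrac{9}{4}\Phi_{zz} = \tfrac{3}{2}N_2 + (N_1)_q \quad \text{in } \R \times (0,\pitwo),
\end{equation}
together with the Neumann condition $\Phi_z(q,0)=0$ at the bottom and the oblique condition $\Phi_z(q,\pitwo) - \tfrac{1}{\sqrt{3}}\Phi(q,\pitwo) = N_3(q)$ at the top, inherited from \eqref{bernfinal}. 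This operator is uniformly elliptic with a complementing oblique boundary condition, so the classical Schauder theory applies.

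For each large $q_0$ I would rescale $\tilde\Phi(s,z) := e^{\beta q_0}\Phi(q_0+s,z)$ on $(s,z) \in [-1,2]\times[0,\pitwo]$; then $\tilde\Phi$ solves the same boundary problem with rescaled data $\tilde F := e^{\beta q_0}\bigl(\tfrac{3}{2}N_2+(N_1)_q\bigr)$ and $\tilde N_3 := e^{\beta q_0}N_3$. The oblique-derivative Schauder estimate (see, e.g., \cite{Lieberman1986}) then gives
\[
\|\tilde\Phi\|_{C^{2,\gamma}([0,1]\times[0,\pitwo])} \leq C\Bigl(\|\tilde F\|_{C^{\gamma}([-1,2]\times[0,\pitwo])} + \|\tilde N_3\|_{C^{1,\gamma}([-1,2])} + \|\tilde\Phi\|_{L^\infty}\Bigr),
\]
with an absolute constant $C$ independent of $q_0$; scaling back, a uniform bound on the right-hand side gives the proposition. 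The $L^\infty$-bound on $\tilde\Phi$ follows from \eqref{phiexpdecay} applied with some $\beta' \in (\beta,2)$.

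Controlling $\tilde F$ and $\tilde N_3$ in H\"older norms is the technical heart of the argument. Every summand of $N_2$, $N_3$ and most summands of $(N_1)_q$ (cf.\ \eqref{N12linear}, \eqref{N3lin}) is a product of a coefficient factor --- a smooth function of $z$ times one of $\xi, \xi_q, e^{-3q/2}, e^{-2q}$ --- with a solution factor chosen among $\Phi, \Psi$ and their first- or second-order derivatives. Lemma \ref{lemmaxi} controls $\xi, \xi_q$ uniformly in $C^{1,1/2}$. Proposition \ref{phistardecay}, combined with the Sobolev embeddings $W^{1,2} \hookrightarrow C^0$ in the $q$-variable and $H^k \hookrightarrow C^{k-1}$ in the $z$-variable, upgrades its $L^2_{\beta'}$-statement to pointwise $C^\gamma$ decay of rate $e^{-\beta q}$ for $\Phi, \Phi_z, \Phi_{zz}, \Psi, \Psi_z, \Phi_q, \Phi_{zq}$. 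Because each product carries at least one solution factor of this decay, the $e^{\beta q_0}$-rescaling is exactly compensated and the corresponding terms in $\tilde F, \tilde N_3$ are uniformly bounded.

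The main obstacle is the coefficient $\xi_{qq}$ that appears in $(N_1)_q$ and is not controlled by the previous lemmas. I would handle it through the trace identity \eqref{xiphi} differentiated twice in $q$,
\[
\xi_{qq}(q) = e^{3q/2}\bigl(\tfrac{9}{4}\Phi(q,\pitwo) + 3\Phi_q(q,\pitwo) + \Phi_{qq}(q,\pitwo)\bigr),
\]
combined with \eqref{proof:ell} restricted to $z=\pitwo$ to re-express $\Phi_{qq}(q,\pitwo)$ in terms of $\Phi_{zz}(q,\pitwo)$, $N_2(q,\pitwo)$ and $(N_1)_q(q,\pitwo)$. The $\xi_{qq}$-containing summands of the last quantity carry small coefficients of order $e^{-\beta q}$ (being themselves solution factors) and can therefore be absorbed into the left-hand side, yielding an explicit bound on $\xi_{qq}$ in terms of already-controlled quantities. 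With $\xi_{qq}$ under control the Schauder estimate closes and produces the asserted decay for any $\beta \in (0,2)$.
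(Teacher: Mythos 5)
Your overall plan---eliminate $\Psi$ to obtain the scalar equation $\Phi_{qq}+\tfrac94\Phi_{zz}=\tfrac32 N_2+(N_1)_q$ with the Neumann and oblique boundary conditions, apply a Schauder estimate on unit $q$-intervals, and close by controlling the right-hand side---is the same route the paper takes, and your elliptic equation is the correct one. The gap lies in how you propose to control the data. You assert that Proposition \ref{phistardecay}, via Sobolev embedding, gives pointwise $C^\gamma$ decay of rate $e^{-\beta q}$ for $\Phi_{zz}$, $\Phi_q$, $\Phi_{zq}$ and $\Psi_z$. This does not follow. Membership of $(\chi\Phi^\star,\chi\Psi^\star)$ in $W^{1,2}_\beta(\R; H^2(0,\pitwo)\times H^1(0,\pitwo))$ yields, through $W^{1,2}(\R)\hookrightarrow C^0(\R)$ and $H^1(0,\pitwo)\hookrightarrow C^0([0,\pitwo])$, only an $L^\infty$ bound of order $e^{-\beta q}$ for $\Phi$, $\Phi_z$ and $\Psi$---exactly what the paper records in \eqref{phiexpdecay}. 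For a fixed $q$ the quantities $\Phi_{zz}$, $\Psi_z$ and $\Phi_{qz}$ are merely in $L^2(0,\pitwo)$, and $\Phi_q$ is only $L^2$ in $q$; none of them is controlled in $C^\gamma$, or even in $L^\infty$, by this information. Since the summands $\tfrac{9\xi}{2(\pi+3\xi)}\Phi_{zz}$ in $N_2$ and $\xi_q\Phi_{qz}$ arising from $(N_1)_q$ are \emph{not} of lower order in the Schauder scale, the estimate for $\tilde F$ does not close, and the argument is in danger of assuming the conclusion.

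The paper resolves this not by a direct decay estimate but by absorption and iteration. Each dangerous term is bounded by $o(1)\,\|\Phi\|_{C^{2,\gamma}([q-1,q+1]\times[0,\pitwo])}$, where the small factor comes from $\|\xi\|_{C^{1,\gamma}}\to 0$ (Lemma \ref{lemmaxi}) and $\|\Phi\|_{C^{1,\gamma}}e^{3q/2}\to 0$ (Proposition \ref{holderdecay}); the $\xi_{qq}$ contribution is handled simply by $\|\xi_q\|_{C^{1,\gamma}}\le e^{3q/2}\|\Phi\|_{C^{2,\gamma}}$ from \eqref{xiphi}. This produces the inequality $\|\Phi\|_{2,\gamma;[q,q+1]}\le C\{o(1)\|\Phi\|_{2,\gamma;[q-1,q+1]}+e^{-\beta q}\}$, which is then iterated from the a priori decay $\|\Phi\|_{2,\gamma;[q,q+1]}=O(e^{-3q/2})$ given by Lemma \ref{lemmahatpsi}. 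Your handling of $\xi_{qq}$ via the trace identity and re-substitution of the elliptic equation is in effect the same absorption mechanism for that single coefficient, but you would need to apply the analogous absorption---and the subsequent iteration---to every second-order solution factor in $N_1$, $N_2$, rather than invoking a Sobolev upgrade that does not hold.
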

\begin{proof}
	Let $\gamma \in (0,1/2)$ and $0 < \beta < 2$ be given. First we derive a second-order equation for the function $\Phi$ by differentiating \eqref{sys:Phi} and using \eqref{sys:Psi} to replace $\Psi_q$. This gives
	\[
		\Phi_{qq}+\tfrac92 \Phi = (N_1)_q + \tfrac32 N_2.
	\]
	Note that $N_2$ is given in a divergence from: $N_2 = (N_2^\circ)_z$. For a given $q$ we consider intervals $I=[q+1]$ and $I_1 = [q-1,q+1]$ and apply Theorem 9.3 from \cite{AgmonDouglisNirenberg59} to conclude
	\begin{equation}\label{shaud1}
	\begin{split}
		\|\Phi\|_{C^{2,\gamma}(I \times [0,\pitwo])} \leq C \Big\{ & \|N_1\|_{C^{1,\gamma}(I_1 \times [0,\pitwo])} + \|N_2\|_{C^{\gamma}(I_1 \times [0,\pitwo])} + \\	
		&\|N_3\|_{C^{1,\gamma}(I_1)} + \|\Phi\|_{L^{2}(I_1 \times [0,\pitwo])}  \Big\},
	\end{split}
	\end{equation}
	where the constant is independent of $q$. To simplify the notations we will use the following convention:
		\[
			\| \cdot \|_{C^{k,\gamma}(I_1 \times [0,\pitwo])} = \| \cdot \|_{k, \gamma; I_1}, \ \ k=0,1,2.
		\]
	Let us estimate the right-hand side in \eqref{shaud1}. First, we note that \eqref{sys:Phi} implies
	\[
		\|\Psi\|_{1,\gamma;I_1} \leq C \big\{ \|\Phi\|_{2,\gamma;I_1} + \|\xi\|_{1,\gamma; I_1} \|\Phi\|_{2,\gamma; I_1} + \|\xi_q\|_{1,\gamma; I_1}\|\Phi\|_{1,\gamma; I_1} \big\}.
	\]
	On the other hand, we find from \eqref{xiphi} that
	\[
		\|\xi_q\|_{1,\gamma; I_1} \leq \|\Phi\|_{2,\gamma;I_1} e^{\tfrac32 q}.
	\]
	Note that $\|\Phi\|_{1,\gamma; I_1} e^{\tfrac32 q} \to 0$ as $q \to +\infty$ by Proposition \ref{holderdecay}, while $\|\xi\|_{1,\gamma; I_1} \to 0$ by Lemma \ref{lemmaxi} so that
	\[
		\|\Psi\|_{1,\gamma;I_1} \leq C \|\Phi\|_{2,\gamma;I_1}.
	\]
	In a similar fashion we estimate the right-hand side in \eqref{shaud1} and conclude
	\begin{equation}\label{shaud2}
		\|\Phi\|_{2,\gamma;I} \leq C \Big\{ o(1) \|\Phi\|_{2,\gamma;I_1} + e^{-\beta q} \Big\},
	\end{equation}
	where $o(1)$ is bounded and $o(1) \to 0$ as $q \to +\infty$. Here we used \eqref{phiexpdecay} to estimate the corresponding $L^2$-norm. Taking into account that the norms $\|\Phi\|_{2,\gamma;I}$ are uniformly bounded by \eqref{lemmahatpsi} we conclude the desired estimate from \eqref{shaud2} by the iteration.
\end{proof}

\subsection{Explicit asymptotics for $\Phi$ and $\Psi$}

To find the next leading term for $\Phi^\star$ and $\Psi^\star$ we need to consider an inhomogeneous problem which is obtain from \eqref{sys:Phistar}-\eqref{sys:Psistar} by setting $N_1^\star$ and $N_2^\star$ to zero and $f$ to $\tfrac23 \omega(1)e^{-2q}$. The corresponding equations are 
\begin{align*}
& U^{\star}_q = \tfrac32 V^{\star}, \\
& V^{\star}_q =  -\tfrac32 U^{\star}_{zz} + \tfrac23 \hat{\omega}(0) e^{-2q}.
\end{align*}
Separating variables one finds a solution
\[
U^\star = \tfrac{1}{12}(3-2\cos{(\tfrac43 z)}) \hat{\omega}(0) e^{-2q}.
\]
We will show below that $U^\star$ determines $\Phi^\star$ up to the leading order. For this purpose we put
\[
\Phi^\star = U^\star+\Phi^\dagger , \ \ \Psi^\star = \tfrac23 U^\star + \Psi^\dagger
\]
and obtain from \eqref{sys:Phistar}-\eqref{sys:Psistar} equations for the next order terms $\Phi^\dagger$ and $\Psi^\dagger$:
\begin{align}
& \Phi^{\dagger}_q = \tfrac32 \Psi^{\dagger} + N_1^\star(q)(\Phi^{\dagger},\Psi^{\dagger}) + f_1^\dagger,  \label{sys:Phidag}\\
& \Psi^{\dagger}_q =  -\tfrac32 \Phi^{\dagger}_{zz} + N_2^\star(q)(\Phi^{\dagger},\Psi^{\dagger}) + f_2^\dagger \label{sys:Psidag},
\end{align}
where
\[
	f_1^\dagger = N_1^\star(q)(U^{\star},V^{\star}), \ \ f_2^\dagger = N_2^\star(q)(U^{\star},V^{\star}) + \frac{2 (\hat{\omega}(\bar{\psi})-\hat{\omega}(0)) (\pi + 3 \xi)}{3 \pi}   e^{-2 q}.
\]

\begin{lemma}\label{phidaggerdecay}
	For any $\beta \in (0,5/2)$ there exists a cut-off function $\chi$ such that $(\chi \Phi^\dagger, \chi \Psi^\dagger) \in W^{1,2}_{\beta}(\R;X_1 \times X_2)$.
\end{lemma}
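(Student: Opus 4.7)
The plan is to mirror the proof of Proposition~\ref{phistardecay} very closely. Multiplying \eqref{sys:Phidag}--\eqref{sys:Psidag} by a cut-off $\chi$ supported on $\{q \geq q_0\}$ and rearranging, one obtains
\[
	{\mathcal L}(\chi \Phi^\dagger, \chi \Psi^\dagger) - {\mathcal N}(\chi \Phi^\dagger, \chi \Psi^\dagger) = (\chi_1, \chi_2) + (\chi f_1^\dagger, \chi f_2^\dagger),
\]
where ${\mathcal N}(\varPhi, \varPsi) = (\chi N_1^\star(\varPhi, \varPsi), \chi N_2^\star(\varPhi, \varPsi))$ and $\chi_1, \chi_2$ are the compactly supported commutator contributions from $\chi_q$. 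I would then invert ${\mathcal L} - {\mathcal N}$ on $W^{1,2}_\beta(\R; X_1 \times X_2)$ for a fixed $\beta \in (0, 5/2)$ and identify the abstract solution with $(\chi \Phi^\dagger, \chi \Psi^\dagger)$ via uniqueness.

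The key new input, compared with Proposition~\ref{phistardecay}, is that the source $(\chi f_1^\dagger, \chi f_2^\dagger)$ now lies in $L^2_\beta$ for every $\beta < 5/2$, not just $\beta < 2$. To see this, note that $U^\star$ and $V^\star$ are explicit and of order $O(e^{-2q})$, while every term in the formulas \eqref{N12star} for $N_1^\star, N_2^\star$ carries at least one factor among $\xi$, $\xi_q$ and $e^{-q/2}$. By \eqref{xiexpdecay} we have $\xi = O(e^{-\beta_0 q})$ for any $\beta_0 < 1/2$, and combining \eqref{xiphi} with the exponential $C^{2,\gamma}$ decay from Proposition~\ref{C2gamma} yields the same rate for $\xi_q$. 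This gives $N_j^\star(q)(U^\star, V^\star) = O(e^{-(2+\beta_0)q})$ for every $\beta_0 < 1/2$, while the vorticity correction in $f_2^\dagger$ decays like $e^{-7q/2}$ since $\bar{\psi} = O(e^{-3q/2})$ and $\hat{\omega} \in C^1$. Together these estimates produce the desired $L^2_\beta$-bound on the right-hand side.

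For the inversion, Proposition~\ref{p:L} guarantees that ${\mathcal L} : W^{1,2}_\beta \to L^2_\beta$ is invertible on the whole interval $(0, 5/2)$, because $\tau_1 \approx 1.8$ gives $\tfrac{3}{2}\tau_1 \approx 2.7 > 5/2$ and no smaller positive exceptional value $\tfrac{3}{2}\tau_j$ exists. The norm of ${\mathcal N}: W^{1,2}_\beta \to L^2_\beta$ is controlled by $\sup_{q \geq q_0}(\|N_1^\star(q)\| + \|N_2^\star(q)\|)$, which tends to zero by \eqref{decayN12}, so ${\mathcal L} - {\mathcal N}$ becomes invertible for $q_0$ large, yielding a unique solution $(\varPhi^\dagger, \varPsi^\dagger) \in W^{1,2}_\beta$. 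The identification with $(\chi \Phi^\dagger, \chi \Psi^\dagger)$ is then automatic: Proposition~\ref{phistardecay} places $(\chi \Phi^\star, \chi \Psi^\star)$ in $W^{1,2}_{\beta'}$ for every $\beta' < 2$, and $(\chi U^\star, \chi V^\star)$ trivially lies in the same space thanks to its $O(e^{-2q})$ behaviour, so $(\chi \Phi^\dagger, \chi \Psi^\dagger) \in W^{1,2}_{\beta'}$ for some $\beta' < 2$ within the range of invertibility, and uniqueness finishes the argument.

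The main obstacle I anticipate is the term-by-term bookkeeping in the first step: each factor in \eqref{N12star} acting on the pair $(U^\star, V^\star)$ must be tracked using the sharp decay of $\xi$, $\xi_q$, $\bar{\psi}$, and one cannot afford to lose decay when passing through the nonlinear operators $B^{(1)}, B^{(2)}$ from \eqref{PhitoPhistar}. Exploiting their uniform boundedness between the relevant Sobolev pairs is essential. A secondary point is ensuring that the commutator terms $(\chi_1, \chi_2)$ remain in $L^2_\beta$ for every $\beta < 5/2$, which is automatic because they are compactly supported.
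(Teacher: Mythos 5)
Your argument matches the paper's: both reduce the lemma to the invertibility of ${\mathcal L} - {\mathcal N}$ on $W^{1,2}_\beta$ for $\beta \in (0,5/2)$, which applies here because the new source $(f_1^\dagger, f_2^\dagger)$ decays at rate $e^{-\beta q}$ for all $\beta < 5/2$ and $5/2 < \tfrac32\tau_1$. Your term-by-term verification of the source decay (using $U^\star, V^\star = O(e^{-2q})$, $\xi, \xi_q = O(e^{-\beta_0 q})$ for $\beta_0 < \tfrac12$, and the $C^1$-bound on $\hat{\omega}$) together with the uniqueness-based identification step are precisely the details the paper compresses into its one-sentence proof.
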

\begin{proof}
	One argues the same way as in the proof of Proposition \ref{phistardecay}. We only note that $(f_1^\dagger,f_2^\dagger) \in W^{1,2}_{\beta}(\R;X_1 \times X_2)$ for any $\beta \in (0,5/2)$, while $5/2 < (3/2)\tau_1$.
\end{proof}

Note that we can not apply Shauder estimates directly for the function $\Phi^\dagger$ in order to establish a decay for the first- and second-order derivatives. Instead of that we express
\begin{equation}\label{asympPhi}
	\Phi = U^\star + \tilde{\Phi}, \ \ \Psi = V^\star + \tilde{\Psi}.
\end{equation}
It follows from \eqref{bernhomtmp} and Proposition \ref{C2gamma} that $(\chi \tilde{\Phi}, \chi \tilde{\Psi}) \in W^{1,2}_{\beta}(\R;X_1 \times X_2)$ for any $\beta \in (0,5/2)$. On the other hand, $\tilde{\Phi}$ and $\tilde{\Psi}$ solve the problem
\begin{align}
& \tilde{\Phi}_q = \tfrac32 \tilde{\Psi} + N_1(q)(\tilde{\Phi},\tilde{\Psi}) + \tilde{f}_1,  \label{sys:Phidag}\\
& \tilde{\Psi}_q =  -\tfrac32 \tilde{\Phi}_{zz} + N_2(q)(\tilde{\Phi},\tilde{\Psi}) + \tilde{f}_2 \label{sys:Psidag},
\end{align}
where
\[
\tilde{f}_1 = N_1(q)(U^{\star},V^{\star}), \ \ \tilde{f}_2 = N_2(q)(U^{\star},V^{\star}) + \frac{2 (\hat{\omega}(\bar{\psi})-\hat{\omega}(0)) (\pi + 3 \xi)}{3 \pi}   e^{-2 q}.
\]
Furthermore, we find form \eqref{bernfinal} that
\[
	\tilde{\Phi}_z-\tfrac1{\sqrt{3}} \tilde{\Phi} = N_3(\tilde{\Phi}, \tilde{\Psi})+\tilde{f}_3,
\]
where 
\[
	\tilde{f}_3 = N_3(U^{\star},V^{\star}).
\]
Applying Shauder estimates as in Proposition \ref{C2gamma} we conclude with
\begin{proposition}\label{C2gammatilde} For any $\gamma \in (0,1/2)$ and $\beta \in (0,2)$ there exist $C>0$ and $q_0>0$ such that
	\[
	\|\tilde{\Phi}\|_{C^{2,\gamma}([q,q+1]\times [0,\pitwo])} \leq C e^{-\beta q}
	\]
	for all $q \geq q_0$.
\end{proposition}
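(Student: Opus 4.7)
The plan is to replicate the argument of Proposition \ref{C2gamma}, but now for the subtracted quantity $\tilde{\Phi}$, using the improved weak decay already supplied by Lemma \ref{phidaggerdecay} as the base estimate that is fed into an iterated Schauder bootstrap. First I would derive a second-order scalar equation for $\tilde{\Phi}$ by differentiating the first equation in $q$ and substituting the second to eliminate $\tilde{\Psi}_q$; this yields
\[
\tilde{\Phi}_{qq} + \tfrac{9}{4}\tilde{\Phi}_{zz} = \bigl(N_1(q)(\tilde{\Phi},\tilde{\Psi})\bigr)_q + \tfrac{3}{2}N_2(q)(\tilde{\Phi},\tilde{\Psi}) + (\tilde{f}_1)_q + \tfrac{3}{2}\tilde{f}_2,
\]
subject to the homogeneous Neumann condition $\tilde{\Phi}_z = 0$ at $z=0$ and the oblique boundary condition $\tilde{\Phi}_z - \tfrac{1}{\sqrt{3}}\tilde{\Phi} = N_3(\tilde{\Phi},\tilde{\Psi}) + \tilde{f}_3$ at $z=\pitwo$. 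Because $N_2$ (and $\tilde{f}_2$) enter already in divergence form through a term of type $(N_2^\circ)_z$, the right-hand side is admissible for the Agmon--Douglis--Nirenberg estimate.

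Next I would apply \cite[Theorem 9.3]{AgmonDouglisNirenberg59} on the pair of nested slabs $I = [q,q+1]$ and $I_1 = [q-1,q+1]$, exactly as in \eqref{shaud1}, to get
\[
\|\tilde{\Phi}\|_{C^{2,\gamma}(I\times[0,\pitwo])} \leq C\Bigl\{\|N_1(\tilde{\Phi},\tilde{\Psi})\|_{1,\gamma;I_1} + \|N_2(\tilde{\Phi},\tilde{\Psi})\|_{0,\gamma;I_1} + \|N_3(\tilde{\Phi},\tilde{\Psi})\|_{1,\gamma;I_1} + \|(\tilde{f}_1,\tilde{f}_2,\tilde{f}_3)\|_{(\cdot)} + \|\tilde{\Phi}\|_{L^2(I_1\times[0,\pitwo])}\Bigr\},
\]
with constant independent of $q$. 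The nonlinear terms on the right are controlled exactly as in Proposition \ref{C2gamma}: the relation $\|\tilde{\Psi}\|_{1,\gamma;I_1} \leq C\|\tilde{\Phi}\|_{2,\gamma;I_1}$ follows from the first equation once one uses Lemma \ref{lemmaxi} to know $\|\xi\|_{1,\gamma;I_1}\to 0$, and the identity \eqref{xiphi} to control $\|\xi_q\|_{1,\gamma;I_1}$ (with the $e^{(3/2)q}$ factor killed by the Hölder decay of Proposition \ref{holderdecay} applied to $\Phi = U^\star+\tilde{\Phi}$). This reduces the $N_j(\tilde{\Phi},\tilde{\Psi})$ contributions to $o(1)\|\tilde{\Phi}\|_{2,\gamma;I_1}$, which can be absorbed after choosing $q_0$ large.

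The forcings $\tilde{f}_j$ are explicit: they are $N_j(U^\star,V^\star)$ plus the $(\hat{\omega}(\bar{\psi})-\hat{\omega}(0))e^{-2q}$ remainder. Since $U^\star = O(e^{-2q})$ uniformly in $z$, and since the coefficients appearing in $N_1, N_2, N_3$ are either $O(1)$ or are multiplied by one of $\xi, \xi_q, e^{-3q/2}$, the Hölder norms of each $\tilde{f}_j$ on $I_1$ are $O(e^{-2q})$, which is comfortably within the target rate $\beta < 2$. For the crucial $L^2$-norm on the right, Lemma \ref{phidaggerdecay} combined with the near-identity relation \eqref{bernhomtmp} (which gives $(\chi\tilde{\Phi},\chi\tilde{\Psi}) \in W^{1,2}_{\beta'}$ for any $\beta' < 5/2$) yields $\|\tilde{\Phi}\|_{L^2(I_1\times[0,\pitwo])} = O(e^{-\beta q})$ for any $\beta \in (0,2)$. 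Substituting into the Schauder estimate produces an inequality of the form
\[
\|\tilde{\Phi}\|_{2,\gamma;I} \leq C\bigl\{o(1)\|\tilde{\Phi}\|_{2,\gamma;I_1} + e^{-\beta q}\bigr\},
\]
and iterating (as at the end of the proof of Proposition \ref{C2gamma}, using the uniform bound on $\|\tilde{\Phi}\|_{2,\gamma;I}$ from Lemma \ref{lemmahatpsi}) yields the claim.

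The main obstacle is bookkeeping the coefficients that multiply $\xi_q$: unlike $\xi$, the factor $\xi_q$ has only a weak decay coming from \eqref{xiphi}, and appears inside $N_1, N_2$ both acting on $(\tilde{\Phi},\tilde{\Psi})$ and, via $\tilde{f}_j$, acting on the leading profile $(U^\star,V^\star)$. I would resolve this by always rewriting $\xi_q$ on $z=\pitwo$ through \eqref{xiphi} as $\xi_q = e^{(3/2)q}(U^\star+\tilde{\Phi})_q$ and then invoking the already-established Hölder decay of $\Phi$ from Proposition \ref{C2gamma}; this converts every occurrence of $\xi_q$ in the nonlinear terms into a genuinely small factor, closing the bootstrap uniformly in $q \geq q_0$.
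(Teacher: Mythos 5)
Your proposal is correct and follows exactly the paper's (very terse) proof, which simply says to apply the Schauder estimates as in Proposition~\ref{C2gamma} with the improved weak-decay base from Lemma~\ref{phidaggerdecay}. You have filled in the details faithfully: deriving the second-order equation, applying the Agmon--Douglis--Nirenberg estimate on nested slabs, estimating the nonlinear terms via the decay of $\xi,\xi_q$ and the explicit $e^{-2q}$ decay of $U^\star$, feeding in the $L^2$-decay from the $W^{1,2}_\beta$ membership, and iterating. (Incidentally, your equation $\tilde{\Phi}_{qq}+\tfrac94\tilde{\Phi}_{zz}=\dots$ is the correct one; the corresponding display in the proof of Proposition~\ref{C2gamma} appears to contain a typo.)
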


In fact, we can specify the asymptotics for $\Phi$ even further. For that purpose one needs to solve \eqref{sys:Phidag}-\eqref{sys:Psidag}, where $N_j(q)(\tilde{\Phi},\tilde{\Psi})$ are set to zero and $\tilde{f}_j$ contain only terms of order $e^{-(5/2)q}$, where $\xi$ is replaced by its approximation
\[
	\tilde{\xi} = \frac{\omega(1)}{3}e^{-\tfrac12 q},
\]
which is obtained from \eqref{xiphi}. After a long and tedious calculation one solves reduced equations for $\tilde{\Phi}$ and finds that
\begin{equation}\label{xiasymp}
\xi(q) = \frac{\omega(1)}{3}e^{-\tfrac12 q} + \lambda \omega^2(1) e^{-q} + O(e^{-\tfrac32(\tau_1-1)q}).
\end{equation}
The constant $\lambda \approx 1.1869$ is found numerically, though it can also be done analytically. Furthermore, similar asymptotics are valid for $\xi_q$ and $\xi_{qq}$ and can be obtained by differentiating the latter formula.

\section{Asymptotics for $\eta$ and $\psi$}\label{secasymppsi}

Based on \eqref{asympPhi} we can obtain the corresponding expansions for $\eta_x$ and $\psi$. First, using \eqref{xiasymp} and \eqref{extazeta} we find
\begin{equation}\label{etaasymptmp}
	\begin{split}
		\eta_x & = \frac{\sin(-(\pi/6)+\xi) - \xi_t \cos(-(\pi/6)+\xi)}{\cos(-(\pi/6)+\xi) + \xi_t \sin(-(\pi/6)+\xi)} \\
		& =-\frac{1}{\sqrt{3}}+ \frac{4}{3} \left(\xi-\xi_t\right)-\frac{4 \left(-2 \xi \xi_t+\xi_t^2+\xi^2\right)}{3 \sqrt{3}} + O(\xi^3+\xi_t^3) \\
		& = -\frac{1}{\sqrt{3}}+\frac23 \omega(1)e^{-\tfrac12 t}-\frac{\sqrt{3}-24\lambda}{9} \omega^2(1) e^{-t} + O\left(e^{-\tfrac32(\tau_1-1)t}\right).
	\end{split}
\end{equation}
Note that
\begin{equation}\label{expt}
	e^{-t} = \sqrt{x^2+(r-\eta(x))} = \frac{2}{3^{\tfrac12}} x (1  + O(x^{\tfrac12})),
\end{equation}
so that
\[
	\eta_x = -\frac{1}{\sqrt{3}} +  \kappa \sqrt{x} + O(x), \ \ \kappa = \frac{2^{\tfrac32}}{3^{\tfrac54}} \omega(1).
\]
We can use this information in order to specify the error term in \eqref{expt}, which gives
\[
	e^{-t} =  \frac{2}{3^{\tfrac12}} x \left(1- \frac{\kappa}{2\sqrt{3}} \sqrt{x} + O(x)\right), \ \	e^{-\tfrac12 t} =  \frac{2^{\tfrac12}}{3^{\tfrac14}} \sqrt{x} \left(1- \frac{\kappa}{4\sqrt{3}} \sqrt{x} + O(x)\right).
\]
Finally, using this in \eqref{etaasymptmp} we conclude
\[
	\eta_x = -\frac{1}{\sqrt{3}} +  \kappa \sqrt{x} - \left\{ \frac{1}{18}+\frac{2}{\sqrt{3}} \frac{\sqrt{3}-24\lambda}{9}  \right\} \omega^2(1) x + O\left(x^{-\tfrac32(\tau_1-1)}\right)
\]
as $x \to 0+$. Note that the coefficient
\[
- \left\{ \frac{1}{18}+\frac{2}{\sqrt{3}} \frac{\sqrt{3}-24\lambda}{9}  \right\} \approx 3.37
\]
is positive.

In a similar way one obtains asymptotics for the stream function $\psi$:
\[
	\psi(x,y) = 1 - \cos{\tfrac32 (\theta+\pitwo)} (x^2 + (r-y)^2)^{3/2} + O(x^2+y^2),
\]
where the next order terms can be found explicitly, although the formulas are more complicated.


\bibliographystyle{siam}
\bibliography{bibliography}

\begin{thebibliography}{10}

\bibitem{AgmonDouglisNirenberg59}
{\sc S.~Agmon, A.~Douglis, and L.~Nirenberg}, {\em Estimates near the boundary
  for solutions of elliptic partial differential equations satisfying general
  boundary conditions. {I}}, Comm. Pure Appl. Math., 12 (1959), pp.~623--727.

\bibitem{Amick1987a}
{\sc C.~J. Amick and L.~E. Fraenkel}, {\em On the behavior near the crest of
  waves of extreme form}, Transactions of the American Mathematical Society,
  299 (1987), pp.~273--273.

\bibitem{AmickFraenkelToland82}
{\sc C.~J. Amick, L.~E. Fraenkel, and J.~F. Toland}, {\em On the {S}tokes
  conjecture for the wave of extreme form}, Acta Math., 148 (1982),
  pp.~193--214.

\bibitem{AmickToland81a}
{\sc C.~J. Amick and J.~F. Toland}, {\em On periodic water-waves and their
  convergence to solitary waves in the long-wave limit}, Philos. Trans. Roy.
  Soc. London Ser. A, 303 (1981), pp.~633--669.

\bibitem{AmickToland81b}
{\sc C.~J. Amick and J.~F. Toland}, {\em On solitary water-waves of finite
  amplitude}, Arch. Rational Mech. Anal., 76 (1981), pp.~9--95.

\bibitem{Constantin11b}
{\sc A.~Constantin}, {\em Nonlinear water waves with applications to
  wave-current interactions and tsunamis}, vol.~81 of CBMS-NSF Regional
  Conference Series in Applied Mathematics, Society for Industrial and Applied
  Mathematics (SIAM), Philadelphia, PA, 2011.

\bibitem{ConstantinStrauss04}
{\sc A.~Constantin and W.~Strauss}, {\em Exact steady periodic water waves with
  vorticity}, Comm. Pure Appl. Math., 57 (2004), pp.~481--527.

\bibitem{Dyachenko2019a}
{\sc S.~A. Dyachenko and V.~M. Hur}, {\em Stokes waves with constant vorticity:
  I. numerical computation}, Studies in Applied Mathematics, 142 (2019),
  pp.~162--189.

\bibitem{GilbargTrudinger01}
{\sc D.~Gilbarg and N.~S. Trudinger}, {\em Elliptic partial differential
  equations of second order}, Classics in Mathematics, Springer-Verlag, Berlin,
  2001.
\newblock Reprint of the 1998 edition.

\bibitem{KeadyNorbury78}
{\sc G.~Keady and J.~Norbury}, {\em On the existence theory for irrotational
  water waves}, Math. Proc. Cambridge Philos. Soc., 83 (1978), pp.~137--157.

\bibitem{JOY2008}
{\sc J.~KO and W.~STRAUSS}, {\em Effect of vorticity on steady water waves},
  Journal of Fluid Mechanics, 608 (2008), pp.~197--215.

\bibitem{LokharuKoz2020}
{\sc V.~Kozlov and E.~Lokharu}, {\em Global bifurcation and highest waves on
  water of finite depth}, Submitted to Archive for Rational Mechanics and
  Analysis,  (2020).

\bibitem{Kozlov1999}
{\sc V.~Kozlov and V.~Maz'ya}, {\em Differential Equations with Operator
  Coefficients}, Springer Berlin Heidelberg, 1999.

\bibitem{Krasovskii60}
{\sc J.~P. Krasovski{\u\i}}, {\em The theory of steady-state waves of large
  amplitude}, Soviet Physics Dokl., 5 (1960), pp.~62--65.

\bibitem{Lieberman1986}
{\sc G.~M. Lieberman and N.~S. Trudinger}, {\em Nonlinear oblique boundary
  value problems for nonlinear elliptic equations}, Transactions of the
  American Mathematical Society, 295 (1986), pp.~509--509.

\bibitem{McLeod1987}
{\sc J.~B. McLeod}, {\em The asymptotic behavior near the crest of waves of
  extreme form}, Transactions of the American Mathematical Society, 299 (1987),
  pp.~299--299.

\bibitem{Plotnikov82}
{\sc P.~I. Plotnikov}, {\em Justification of the {S}tokes conjecture in the
  theory of surface waves}, Dinamika Sploshn. Sredy,  (1982), pp.~41--76.

\bibitem{Plotnikov2004}
{\sc P.~I. Plotnikov and J.~F. Toland}, {\em Convexity of stokes waves of
  extreme form}, Archive for Rational Mechanics and Analysis, 171 (2004),
  pp.~349--416.

\bibitem{Stokes49}
{\sc G.~G. Stokes}, {\em On the theory of oscillatory waves}, Trans. Cambridge
  Phil. Soc., 8 (1849), pp.~441--455.

\bibitem{Stokes80}
{\sc G.~G. Stokes}, {\em Considerations relative to the greatest height of
  oscillatory irrotational waves which can be propogated without change of
  form}, Mathematical and Physical Papers, 1 (1880), pp.~225--228.

\bibitem{Toland1978a}
{\sc J.~F. Toland}, {\em On the existence of a wave of greatest height and
  {S}tokes's conjecture}, Proceedings of the Royal Society of London. A.
  Mathematical and Physical Sciences, 363 (1978), pp.~469--485.

\bibitem{Varvaruca09}
{\sc E.~Varvaruca}, {\em On the existence of extreme waves and the {S}tokes
  conjecture with vorticity}, J. Differential Equations, 246 (2009),
  pp.~4043--4076.

\bibitem{Varvaruca2011}
{\sc E.~Varvaruca and G.~S. Weiss}, {\em A geometric approach to generalized
  {S}tokes conjectures}, Acta Mathematica, 206 (2011), pp.~363--403.

\bibitem{Varvaruca2012}
\leavevmode\vrule height 2pt depth -1.6pt width 23pt, {\em The {S}tokes
  conjecture for waves with vorticity}, Annales de l{\textquotesingle}Institut
  Henri Poincare (C) Non Linear Analysis, 29 (2012), pp.~861--885.

\end{thebibliography}
\end{document}